\newtheorem{assumption}{Assumption}
\newtheorem{myalg}{Algorithm}
\crefname{figure}{Figure}{Figures}
\newcommand{\dx}{\mathop{}\!\mathrm{d}x}
\newcommand{\dt}{\mathop{}\!\mathrm{d}t}
\renewcommand{\phi}{\varphi}
\def \R{\mathbb{R}}
\def \N{\mathbb{N}}
\def \1{\mathbb{1}}
\def \eps{\varepsilon}
\def \Uad{{U_{\mathrm{ad}}}}
\newcommand{\meas}{\operatorname{meas}}
\pgfplotsset{cycle list/Dark2-3}
\pgfplotsset{every axis legend/.append style={font=\footnotesize}}
\title{Error estimates for the approximation of multibang control problems}
\author{Christian Clason\thanks{%
    Faculty of Mathematics, University Duisburg-Essen, 45117 Essen, Germany (\email{christian.clason@uni-due.de}, \email{tram.do@uni-due.de})}
    \and Thi Bich Tram Do\footnotemark[1]
    \and Frank P\"orner\thanks{%
    Department of Mathematics, University of W\"urzburg, Emil-Fischer-Str. 40, 97074 W\"urzburg, Germany (\email{frank.poerner@mathematik.uni-wuerzburg.de})}\footnotemark[2]
}
\begin{document}

\maketitle

\begin{abstract}
    This work is concerned with optimal control problems where the objective functional consists of a tracking-type functional and an additional ``multibang'' regularization functional that promotes optimal control taking values from a given discrete set pointwise almost everywhere.
    Under a regularity condition on the set where these discrete values are attained, error estimates for the Moreau--Yosida approximation (which allows its solution by a semismooth Newton method) and the discretization of the problem are derived. Numerical results support the theoretical findings.
\end{abstract}

\section{Introduction}

We consider linear-quadratic optimal control problems where the optimal control is only allowed to take values at discrete values $u_1 < \dots < u_d \in \R$ with $d \in \N$. Such problems occur, e.g., in topology optimization, nondestructive testing or medical imaging; a similar task also arises as a sub-step in segmentation or labeling problems in image processing. However, such problems are inherently nonconvex and, more importantly, not weakly lower semi-continuous and hence cannot be treated by standard techniques. A classical remedy is convex relaxation, where the nonconvex constraint $u(x) \in \{u_1,\dots,u_d\}$ is replaced by the convex constraint $u(x)\in[u_1,u_d]$, but this leads to ignoring the intermediate parameter values.
In \cite{CK:2013,CK:2015,ClasonDo:2017,CKK:2017,CTW:2016}, it was therefore proposed to promote all desired control values using a convex \emph{multibang} penalty
\begin{equation*}
    G(u): L^2(\Omega) \to \R, \qquad u \mapsto \int\limits_\Omega g(u(x)) \dx,
\end{equation*}
for a suitable convex integrand $g: \R \to \R$ with a polyhedral epigraph whose vertices correspond to the desired control values $u_1,\dots,u_d$.
We thus consider the \emph{multibang control problem}
\begin{equation}\label{eq:intro:unreg}
    \min\limits_{u \in L^2(\Omega )} \frac{1}{2}\|Ku - z\|_Y^2 + \alpha G(u)
\end{equation}
with $\alpha > 0$, $z \in Y$ for a Hilbert space $Y$, and $K:L^2(\Omega)\to Y$ a linear and continuous operator (e.g., the solution operator for a linear elliptic partial differential equation).
Just as in $L^1$ regularization for sparsity (and in linear optimization), it can be expected that minimizers are found at the vertices of $G$, thus yielding the desired structure. 
Furthermore, it was shown in \cite{CIK:2014,CK:2015,ClasonDo:2017} that this leads to a primal-dual optimality system that can be solved by a superlinearly convergent semismooth Newton method in function space \cite{Ulbrich:2011,Kunisch:2008a} if a suitable Moreau--Yosida approximation (of the Fenchel conjugate $G^*$, see \cref{thm:optimality_set_reg} below) is introduced. It turns out that this approximation can be expressed in primal form as 
\begin{equation}\label{eq:intro:reg}
    \min\limits_{u \in L^2(\Omega )} \frac{1}{2}\|Ku - z\|_Y^2 + \alpha G(u) + \frac{\gamma}{2} \|u\|^2_{L^2(\Omega)}
\end{equation}
for a parameter $\gamma>0$.
We remark that this approach (i.e., applying the approximation to $G^*$ instead of $G$) does not destroy the non-differentiability of $G$ and hence preserves the structural properties of \eqref{eq:intro:unreg}.
Standard lower semicontinuity techniques can then be applied to show that the solutions to \eqref{eq:intro:reg} converge weakly to the solution to \eqref{eq:intro:unreg} as $\gamma\to 0$; see \cite[\S\,4.1]{CK:2015}. The aim of this paper is to establish strong convergence and in particular approximation error estimates for $\|\bar u - u_\gamma\|_{L^2(\Omega)}$. 

Let us recall some literature and already known results.
For the case $d=2$ we obtain the minimization problem
\begin{equation}\label{eq:intro:bangbang}
    \min\limits_{u_1 \leq u \leq u_2} \frac{1}{2}\|Ku - z\|_Y^2.
\end{equation}
and if the associated adjoint state $\bar p(x) \neq 0$ almost everywhere, it is well-known that $\bar u$ exhibits a bang-bang structure, i.e. $\bar u(x) \in \{u_1, u_2\}$ almost everywhere. This problem has been studied intensively in the literature, see \cite{Stadler2009,Troeltzsch2010,Wachsmuth2013,Wachsmuth:2011b,Wachsmuth:2011a} and the references therein. Note that this list is far away from being complete. For this problem a structural assumption has been established in \cite{Wachsmuth:2011b,Wachsmuth:2011a}, which controls the behavior of the adjoint state around a singular set and guarantees that the optimal control $\bar u$ exhibits a bang-bang structure. Using this assumption, error estimates for the approximation of \eqref{eq:intro:bangbang} can be proven; see \cite{Wachsmuth:2011b}. A related question is the Moreau--Yosida approximation of state constraints; see \cite{Himue:2009,Schiela:2014}.

If $d=3$ and $u_1 < u_2 = 0 < u_3$, the problem \eqref{eq:intro:unreg} resembles the minimization problem
\begin{equation}\label{eq:intro:bangbangoff}
    \min\limits_{u_1 \leq u \leq u_2} \frac{1}{2}\|Ku - z\|_Y^2 + \alpha \|u\|_{L^1(\Omega)},
\end{equation}
see, e.g., \cite{Stadler2009}.
The structural assumption used to prove error rates for the approximation of \eqref{eq:intro:bangbang} can be generalized to problem \eqref{eq:intro:bangbangoff}. Again, approximation error estimates can be proven; see \cite{Wachsmuth2013,Wachsmuth:2011b,Wachsmuth:2011a} and the reference therein. 

We will generalize this structural assumption to the multibang control problem \eqref{eq:intro:unreg}. We will show that this assumption is sufficient to guarantee that an optimal control $\bar u$ of \eqref{eq:intro:unreg} satisfies $\bar u(x) \in \{u_1,\dots,u_d\}$ for almost all $x \in \Omega$. Furthermore, we will use this condition to prove approximation error estimates of the form
\begin{equation*}
    \|\bar u - u_\gamma\|_{L^2(\Omega)} = \mathcal{O} \left( \gamma^{\frac{\kappa}{2}} \right)
\end{equation*}
with a constant $\kappa>0$ depending only on the structural assumption.

The paper is organized as follows. In \cref{sec:prelim} we recall some preliminary results which are needed for the convergence analysis. Our structural assumption is introduced in \cref{sec:reg_estimates} and used to derive the approximation error estimates. This is also the main result of this paper. In \cref{sec:disc_error}, we establish discretization error estimates under our structural assumption. We introduce an active set method for the solution of \eqref{eq:intro:reg} and show its equivalence to a semismooth Newton method in \cref{sec:active_set}. Finally, numerical results to support our theoretical findings can be found in \cref{sec:numerics}.

\section{Preliminary results}\label{sec:prelim}

Let $u_1 < u_2 < \dots < u_d$ be some given real numbers with $d \geq 2$, and let $\Omega \subset \R^n$ be a bounded domain. Following \cite{CK:2013,CK:2015,CKK:2017,ClasonDo:2017}, we define the piecewise linear function
\begin{equation*}
    g(v) := \begin{cases}
        \frac{1}{2}( (u_i + u_{i+1})v - u_i u_{i+1} ) &\text{if } v \in [u_i, u_{i+1}], \quad 1 \leq i < d,\\
        \infty & \text{else}.
    \end{cases}
\end{equation*}
As the pointwise supremum of affine functions, $g$ is convex and continuous on the interior of its domain $\mathrm{dom}(g) = [u_1,u_d]$. Hence, the corresponding integral functional
\begin{equation*}
    G: L^2(\Omega) \to \R, \qquad u \mapsto \int\limits_\Omega g(u(x)) \dx,
\end{equation*}
is proper, convex and weakly lower semicontinuous as well; see, e.g., \cite[Proposition 2.53]{Barbu}.

We now consider the problem
\begin{equation}\label{eq:prob_noUad}
    \min\limits_{u \in L^2(\Omega)} \frac{1}{2}\|Ku - z\|_Y^2 + \alpha G(u)
\end{equation}
with a parameter $\alpha > 0$. Standard semi-continuity methods then yield existence of a minimizer $\bar u$, which is unique if $K$ is injective; see~\cite{CK:2015}. 
We will later impose a condition which guarantees that $\bar u$ exhibits a multibang structure, i.e., $\bar u(x) \in \{u_1,\dots,u_d\}$ for almost every $x\in \Omega$.

Let us further define the set
\begin{equation*}
    \Uad := \{u \in L^2(\Omega): u_1 \leq u(x) \leq u_d \} = \mathrm{co} \left\{ u \in L^2(\Omega): u(x) \in \{u_1, \dots ,u_d\} \right\},
\end{equation*}
where $\mathrm{co}$ denotes the convex hull.
It is clear that \eqref{eq:prob_noUad} is equivalent to the problem
\begin{equation}\label{eq:main}\tag{$P$}
    \min\limits_{u \in \Uad} \frac{1}{2}\|Ku - z\|_Y^2 + \alpha G(u).
\end{equation}
We will use this equivalent formulation to derive variational inequalities which will be useful in the convergence analysis.
Standard convex analysis techniques then yield primal--dual optimality conditions; see, e.g.,\cite{CK:2015,ClasonDo:2017}.
\begin{proposition}\label{thm:optimality_set}
    Define the sets
    \begin{align*}
        Q_1 &:= \left\{q: q < \frac{\alpha}{2}(u_1 + u_2)\right\},\\
        Q_i &:= \left\{q: \frac{\alpha}{2}(u_{i-1} + u_i) < q < \frac{\alpha}{2}(u_i + u_{i+1}) \right\}, \quad 1 < i < d,\\
        Q_d &:= \left\{ q: q > \frac{\alpha}{2}(u_{d-1} + u_d) \right\},\\
        Q_{i,i+1} &:= \left\{ q: q = \frac{\alpha}{2}(u_i + u_{i+1}) \right\}.
    \end{align*}
    Let $\bar u \in \Uad$ with associated adjoint state $\bar p := K^\ast(z - K \bar u)$. Then $\bar u$ is a solution to \eqref{eq:main} if and only if
    \begin{equation}\label{eq:oc}
        \bar u(x) \in 
        \begin{cases}
            \{u_i\} \quad &\text{if } \bar p(x) \in Q_i \quad 1 \leq i \leq d,\\
            [u_i, u_{i+1}] \quad & \text{if } \bar p(x) \in Q_{i,i+1} \quad 1 \leq i < d.
        \end{cases}
    \end{equation}
\end{proposition}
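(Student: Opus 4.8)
The plan is to exploit that \eqref{eq:main} is a convex minimization problem, so that a single first-order optimality condition is both necessary and sufficient, and then to translate that abstract condition into the stated pointwise case distinction by computing the subdifferential of $g$ explicitly and inverting the resulting inclusion.

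First I would record that the objective $J(u) := \frac{1}{2}\|Ku - z\|_Y^2 + \alpha G(u)$ is convex and that the tracking term $F(u) := \frac{1}{2}\|Ku - z\|_Y^2$ is Fréchet differentiable on $L^2(\Omega)$ with gradient $F'(u) = K^\ast(Ku - z)$. Since $F$ is continuous everywhere, the Moreau--Rockafellar sum rule applies without a constraint qualification and gives $\partial J(u) = F'(u) + \alpha\,\partial G(u)$. Hence $\bar u$ solves \eqref{eq:main} if and only if $0 \in \partial J(\bar u)$, which, after inserting $F'(\bar u) = K^\ast(K\bar u - z) = -\bar p$, is equivalent to the inclusion
\[
    \bar p \in \alpha\, \partial G(\bar u).
\]

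Next I would localize this inclusion. Since $G$ is the integral functional associated with the (normal, convex) integrand $g$ and is finite on $\Uad$, Rockafellar's characterization of subdifferentials of integral functionals yields $\partial G(\bar u) = \{p \in L^2(\Omega): p(x) \in \partial g(\bar u(x)) \text{ for a.e.\ } x\}$, so the optimality condition reduces to $\bar p(x) \in \alpha\,\partial g(\bar u(x))$ almost everywhere. It then remains to compute $\partial g$ and invert this relation. On the interior of each segment $[u_i,u_{i+1}]$ the function $g$ is affine with slope $\frac{1}{2}(u_i + u_{i+1})$, so $\partial g(v) = \{\frac{1}{2}(u_i + u_{i+1})\}$ there; at an interior vertex $u_i$ (for $1 < i < d$) the subdifferential is the interval $[\frac{1}{2}(u_{i-1}+u_i),\, \frac{1}{2}(u_i + u_{i+1})]$ between the left and right slopes, while at the endpoints it is the half-line $(-\infty, \frac{1}{2}(u_1+u_2)]$ at $u_1$ and $[\frac{1}{2}(u_{d-1}+u_d), \infty)$ at $u_d$, because $g = \infty$ outside $[u_1,u_d]$. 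Multiplying by $\alpha$ and reading the inclusion $\bar p(x) \in \alpha\,\partial g(\bar u(x))$ backwards produces exactly the sets $Q_i$ and $Q_{i,i+1}$: if $\bar p(x)$ lies strictly inside the subdifferential interval of a vertex, i.e.\ $\bar p(x) \in Q_i$, then $\bar u(x)$ must equal that vertex $u_i$, whereas if $\bar p(x)$ equals the common slope $\frac{\alpha}{2}(u_i + u_{i+1})$, i.e.\ $\bar p(x) \in Q_{i,i+1}$, then $\bar u(x)$ may take any value in $[u_i,u_{i+1}]$.

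The main technical obstacle is the pointwise characterization of $\partial G$, which rests on the measurable-selection machinery behind Rockafellar's interchange theorem together with a verification that $g$ is a normal integrand and that $G$ is proper; this is where the infinite values of $g$ outside $[u_1,u_d]$ must be handled with care, equivalently by splitting $G$ into a finite integrand plus the indicator of $\Uad$. A secondary point requiring attention is the inversion of the subdifferential relation at the break values: one must check that the boundary cases $\bar p(x) = \frac{\alpha}{2}(u_i + u_{i+1})$ are exactly the sets $Q_{i,i+1}$ and that the open sets $Q_i$ leave no ambiguity, so that the cases partition $\R$ and the equivalence holds in both directions.
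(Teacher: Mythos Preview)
Your proposal is correct and is exactly the standard convex-analysis derivation the paper alludes to: the paper does not spell out a proof of this proposition but simply cites \cite{CK:2015,ClasonDo:2017} for ``standard convex analysis techniques,'' and those references proceed precisely via the sum rule $0\in -\bar p + \alpha\,\partial G(\bar u)$, the pointwise characterization of $\partial G$ for integral functionals, and the explicit computation of $\partial g$ at the vertices and on the open segments. Your identification of the two technical points (normality of the integrand / properness of $G$, and the careful inversion at the break values) is apt and matches what is needed.
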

It is clear that the optimal solution $\bar u$ is uniquely determined by the adjoint state on the sets $\{x\in\Omega: \bar p(x)\in Q_i\}$. 
We see furthermore that $\bar u(x) \in \{ u_1, \dots, u_d \}$ almost everywhere on $\Omega$ if $\meas \{x\in \Omega:\bar p(x)\in Q_{i,i+1}\} = 0$ for all $1 \leq i < d$. Hence $\bar u$ has a multibang structure in this case. In the following, we will make use of this relation to construct a suitable regularity condition on these sets. 
\begin{remark}\label{rem:alpha}
        Although the dependence of the optimal controls on $\alpha$ is not the focus of this work -- see instead the earlier works \cite{CK:2013,CK:2015,CTW:2016,CKK:2017}, and, in particular, \cite[Section 5]{ClasonDo:2017} -- let us recall the essential features for the sake of completeness. First, note that $\alpha$ enters the optimality conditions \eqref{eq:oc} only via the case distinction for the sets $Q_i$ and $Q_{i,i+1}$. Specifically, increasing the value of $\alpha$ shifts the conditions on $\bar p$ so that desired control values  $u_i$ of smaller magnitude are preferred. Conversely, for $\alpha\to0$, these conditions coincide with the well-known optimality conditions for bang-bang control problems where only $Q_1$, $Q_d$, and $Q_{1,d}$ are relevant; see, e.g., \cite[Lemma 2.26]{Troeltzsch2010}.
        This implies that apart from singular cases where $\meas\{x\in \Omega:\bar p(x) =c \}\neq 0$ for some $c\in\R$, the value of $\alpha$ does not influence the `` strength'' of the multibang penalty in enforcing the desired control values but only the specific selection among these values.
\end{remark}

We next introduce the  Moreau--Yosida approximation of \eqref{eq:main} with a regularization parameter $\gamma > 0$,
\begin{equation}\label{eq:main_tik}\tag{$P_\gamma$}
    \min\limits_{u \in \Uad} \frac{1}{2}\|Ku - z\|_Y^2 + \alpha G(u) + \frac{\gamma}{2} \|u\|_{L^2(\Omega)}^2.
\end{equation}
As for \eqref{eq:main}, arguments from convex analysis lead to the following optimality conditions; see \cite{CK:2015,ClasonDo:2017}.
\begin{proposition}\label{thm:optimality_set_reg}
    Define the sets
    \begin{align*}
        Q_1^\gamma &:= \left\{q: q < \frac{\alpha}{2} \left( \left(1+2 \frac{\gamma}{\alpha} \right)u_1 + u_2 \right) \right\},\\
        Q_i^\gamma &:= \left\{q: \frac{\alpha}{2}\left( u_{i-1} + \left(1 + 2 \frac{\gamma}{\alpha} \right)u_i \right) < q < \frac{\alpha}{2} \left( \left(1+2 \frac{\gamma}{\alpha} \right)u_i + u_{i+1} \right) \right\} ,\\
        Q_{i,i+1}^\gamma &:= \left\{q: \frac{\alpha}{2}\left( \left( 1+ 2 \frac{\gamma}{\alpha} \right)u_i + u_{i+1} \right) \leq q \leq \frac{\alpha}{2} \left( u_i + \left( 1 + 2 \frac{\gamma}{\alpha} \right)u_{i+1} \right) \right\},\\
        Q_d^\gamma &:= \left\{ q: \frac{\alpha}{2} \left( u_{d-1} + \left(1+2 \frac{\gamma}{\alpha} \right)u_d \right) < q \right\}.
    \end{align*}
    Let $u_\gamma \in \Uad$ with associated adjoint state $p_\gamma := K^\ast(z - K u_\gamma)$. Then $u_\gamma$ is a solution to \eqref{eq:main_tik} if and only if
    \begin{equation}\label{eq:opt_reg}
        u_\gamma(x) =
        \begin{cases}
            u_i \quad &\text{if } p_\gamma(x) \in Q_i^\gamma \quad 1 \leq i \leq d,\\
            \frac1\gamma \left(p_\gamma(x) -\tfrac\alpha2(u_i+u_{i+1})\right)\quad & \text{if } p_\gamma(x) \in Q_{i,i+1} \quad 1 \leq i < d.
        \end{cases}
    \end{equation}
\end{proposition}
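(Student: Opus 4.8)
The plan is to derive \eqref{eq:opt_reg} from the first-order optimality condition of the convex problem \eqref{eq:main_tik} and then reduce it to a pointwise scalar minimization that can be solved explicitly. Since $\gamma>0$, the objective in \eqref{eq:main_tik} is strictly convex, and its smooth part $u\mapsto \frac12\|Ku-z\|_Y^2+\frac\gamma2\|u\|_{L^2(\Omega)}^2$ is Fréchet differentiable with gradient $K^\ast(Ku-z)+\gamma u$; moreover $G$ is proper, convex, and lower semicontinuous with $\mathrm{dom}(G)=\Uad$, so the constraint $u\in\Uad$ is already encoded in $G$. The sum rule for convex subdifferentials therefore shows that $u_\gamma$ solves \eqref{eq:main_tik} if and only if
\begin{equation*}
    0 \in K^\ast(Ku_\gamma - z) + \gamma u_\gamma + \alpha\,\partial G(u_\gamma),
\end{equation*}
which, using $p_\gamma = K^\ast(z-Ku_\gamma)$, is equivalent to $\tfrac1\alpha(p_\gamma-\gamma u_\gamma)\in\partial G(u_\gamma)$.

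Next I would invoke the characterization of the subdifferential of an integral functional with a normal convex integrand, which gives the pointwise statement $\tfrac1\alpha(p_\gamma(x)-\gamma u_\gamma(x))\in\partial g(u_\gamma(x))$ for almost every $x\in\Omega$. This inclusion is precisely the first-order optimality condition for the scalar strictly convex problem
\begin{equation*}
    u_\gamma(x) = \argmin_{v\in[u_1,u_d]}\left[\frac\gamma2 v^2 + \alpha\,g(v) - p_\gamma(x)\,v\right],
\end{equation*}
whose integrand is a strictly convex, continuous, piecewise-quadratic function of $v$; strict convexity (from $\gamma>0$) yields a \emph{unique} minimizer, which is why \eqref{eq:opt_reg} is an equality rather than the inclusion appearing in \cref{thm:optimality_set}.

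It then remains to solve this scalar problem explicitly by the standard case analysis. On each piece $[u_i,u_{i+1}]$ the integrand is smooth with derivative $\gamma v+\tfrac\alpha2(u_i+u_{i+1})-q$, where $q:=p_\gamma(x)$; setting it to zero gives the interior candidate $v=\tfrac1\gamma\bigl(q-\tfrac\alpha2(u_i+u_{i+1})\bigr)$, which lies in $[u_i,u_{i+1}]$ exactly when $q\in Q_{i,i+1}^\gamma$, recovering the second line of \eqref{eq:opt_reg}. Otherwise the piecewise minimizer is attained at a vertex $u_i$; requiring $u_i$ to be optimal from both adjacent pieces (or from the single admissible piece at the boundary vertices $u_1,u_d$, where $g=\infty$ beyond the feasible interval) translates, after adding $\gamma u_i$ to the slope thresholds $\tfrac\alpha2(u_{i-1}+u_i)$ and $\tfrac\alpha2(u_i+u_{i+1})$ of $\partial g$, into exactly the bounds defining $Q_i^\gamma$. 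The short computation $\tfrac\alpha2(u_{i-1}+u_i)+\gamma u_i=\tfrac\alpha2\bigl(u_{i-1}+(1+2\tfrac\gamma\alpha)u_i\bigr)$ (and its mirror image) confirms that the thresholds coincide with the stated region definitions.

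I expect the only genuinely delicate point to be the passage from $\tfrac1\alpha(p_\gamma-\gamma u_\gamma)\in\partial G(u_\gamma)$ to its pointwise counterpart, i.e.\ the interchange of subdifferentiation and integration, which relies on $g$ being a normal integrand together with a finiteness/constraint-qualification condition for $G$ on $\Uad$; once this is in place, the remaining case analysis is routine bookkeeping. For consistency I would also briefly note that the open regions $Q_i^\gamma$ and the closed transition regions $Q_{i,i+1}^\gamma$ partition $\R$, and that at their shared endpoints the vertex value $u_i$ and the affine formula agree by continuity, so that \eqref{eq:opt_reg} defines $u_\gamma(x)$ unambiguously.
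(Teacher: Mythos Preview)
Your proposal is correct and self-contained. Note, however, that the paper does not actually supply a proof of this proposition: it simply cites \cite{CK:2015,ClasonDo:2017} and, in the remark following the statement, indicates the intended viewpoint by observing that \eqref{eq:opt_reg} is the pointwise form of $u_\gamma \in (\partial G^*)_\gamma(p_\gamma)$, i.e.\ the Yosida approximation of the subdifferential of the Fenchel conjugate $G^*$ applied to $p_\gamma$.

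Your route is the \emph{primal} one: you write the optimality condition as $\tfrac1\alpha(p_\gamma-\gamma u_\gamma)\in\partial G(u_\gamma)$, pass to the pointwise inclusion for the normal integrand $g$, and solve the resulting strictly convex piecewise-quadratic scalar problem directly. The paper's framing is the \emph{dual} one, going through $G^*$ and the Moreau--Yosida calculus to identify the solution map $p_\gamma\mapsto u_\gamma$ as a proximal-type operator. Both yield the same pointwise formula; your approach is more elementary in that it avoids introducing $G^*$ and the Moreau envelope altogether, at the cost of a short case analysis. The dual viewpoint, on the other hand, makes the connection to the semismooth Newton method in \cref{sec:active_set} more transparent, since $H_\gamma=(\partial G^*)_\gamma$ is precisely the operator whose Newton differentiability is exploited there. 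Your handling of the boundary between $Q_i^\gamma$ and $Q_{i,i+1}^\gamma$ (noting that the vertex value and the affine formula coincide at the shared endpoints) is the right way to reconcile the open/closed conventions in the statement.
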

We remark that \eqref{eq:opt_reg} is the explicit pointwise characterization of $u_\gamma \in (\partial G^*)_\gamma(p_\gamma)$, where $(\partial G^*)_\gamma$ denotes the Yosida approximation of the convex subdifferential (which coincides with the Fréchet derivative of the Moreau envelope) of the Fenchel conjugate of $G$, which justifies the term \emph{Moreau--Yosida approximation}; see, e.g., \cite[\S\,4.1]{ClasonDo:2017}.

We can also derive purely primal first-order optimality conditions for \eqref{eq:main} and \eqref{eq:main_tik} in terms of variational inequalities using standard arguments as in, e.g., \cite[Thm.~2.22]{Troeltzsch2010}.
\begin{proposition}\label{thm:opt_variational}
    Let $\bar u$ and $u_\gamma$ be solutions of \eqref{eq:main} and \eqref{eq:main_tik} with associated adjoint states $\bar p := K^\ast(z - K\bar u)$ and $p_\gamma := K^\ast(z - K u_\gamma)$, respectively. Then,
    \begin{align*}
        \left( -\bar p, u - \bar u \right)_{L^2(\Omega)} + \alpha G'(\bar u; u - \bar u) &\geq 0 \quad \text{for all } u \in \Uad,\\
        \left( -p_\gamma + \gamma u_\gamma, u - u_\gamma \right)_{L^2(\Omega)} + \alpha G'(u_\gamma; u - u_\gamma) &\geq 0 \quad \text{for all } u \in \Uad.
    \end{align*}
\end{proposition}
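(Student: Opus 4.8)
The plan is to treat both inequalities as instances of the classical first-order necessary condition for minimizing the sum of a smooth and a convex nonsmooth functional over a convex set, following the argument in \cite[Thm.~2.22]{Troeltzsch2010}. I would begin by recording the structural ingredients: the admissible set $\Uad$ is convex, the tracking term $F(u) := \frac{1}{2}\|Ku - z\|_Y^2$ is Fréchet differentiable with $F'(u)h = (K^\ast(Ku - z), h)_{L^2(\Omega)}$, and $G$ is proper and convex, hence admits a one-sided directional derivative $G'(u; h) = \lim_{t \to 0^+} t^{-1}(G(u + th) - G(u))$ in every direction. The key observation is that for $\bar u, u \in \Uad$ the whole segment $\bar u + t(u - \bar u)$, $t \in [0,1]$, lies in $\Uad \subset \mathrm{dom}(G) = \{u_1 \le u \le u_d\}$, so all quantities below are finite.

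For the first inequality I would exploit minimality directly. Since $\bar u$ solves \eqref{eq:main} and $\bar u + t(u - \bar u) \in \Uad$ for every $t \in (0,1]$, minimality gives the nonnegative difference quotient
\[
    \frac{1}{t}\Big( F(\bar u + t(u - \bar u)) + \alpha G(\bar u + t(u - \bar u)) - F(\bar u) - \alpha G(\bar u) \Big) \ge 0 .
\]
Passing to the limit $t \to 0^+$ and using that the directional derivative of a sum equals the sum of the directional derivatives whenever both summands admit one (here the smooth part converges to $F'(\bar u)(u - \bar u)$ and the convex part to $\alpha G'(\bar u; u - \bar u)$), I obtain $F'(\bar u)(u - \bar u) + \alpha G'(\bar u; u - \bar u) \ge 0$. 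Inserting the definition of the adjoint state $\bar p = K^\ast(z - K\bar u)$ gives $F'(\bar u)(u - \bar u) = (K^\ast(K\bar u - z), u - \bar u)_{L^2(\Omega)} = (-\bar p, u - \bar u)_{L^2(\Omega)}$, which is precisely the claimed inequality.

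The second inequality follows by the identical argument applied to \eqref{eq:main_tik}, whose objective carries the additional smooth term $\frac{\gamma}{2}\|u\|^2_{L^2(\Omega)}$. Its Fréchet derivative at $u_\gamma$ in the direction $u - u_\gamma$ equals $\gamma(u_\gamma, u - u_\gamma)_{L^2(\Omega)}$, which combines with $(-p_\gamma, u - u_\gamma)_{L^2(\Omega)}$ to yield the factor $-p_\gamma + \gamma u_\gamma$.

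I expect the only point requiring care to be the interchange of the limit $t \to 0^+$ with the nondifferentiable functional $G$, i.e.\ justifying both the existence of $G'(\bar u; u - \bar u)$ and the additivity of the directional derivative of $F + \alpha G$. This is where convexity does the work: by convexity of $G$ the difference quotients $t^{-1}(G(\bar u + t(u-\bar u)) - G(\bar u))$ are monotonically nonincreasing as $t \downarrow 0$ and bounded below (since $G$ is finite on the segment), hence convergent; and because $F$ is genuinely Fréchet differentiable its difference quotient converges on its own, so the two limits add. No differentiability of $G$ at the vertices $u_i$ is needed -- indeed it is precisely the nonsmoothness of $G$ there that forces the use of directional derivatives rather than a gradient identity, and this encodes the multibang structure in the optimality system.
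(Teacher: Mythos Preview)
Your proposal is correct and matches the paper's approach exactly: the paper does not give an explicit proof but simply refers to ``standard arguments as in, e.g., \cite[Thm.~2.22]{Troeltzsch2010}'', which is precisely the difference-quotient-along-a-segment argument you have written out. Your additional remarks on why $G'(\bar u; u-\bar u)$ exists and why the directional derivative of the sum splits are the right justifications for the only nontrivial step.
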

Here, $G'(\bar u; u - \bar u)$ denotes the directional derivative of $G$ at $\bar u$ in direction $u - \bar u$, which will be characterized in the following lemma. Note that for $\bar u, u \in \Uad$ we have $u- \bar u \in T_\Uad(\bar u)$ for 
\[
    T_\Uad(u) := \left\{ v\in L^2(\Omega): v(x) \left.  \begin{cases} \geq 0 & \text{if }  u(x) = u_1\\ \leq 0 & \text{if } u(x) = u_d \end{cases} \right\} \right\},
\]
i.e., the tangential cone to $\Uad$ in the point $u$.
It thus suffices to consider directional derivatives for directions in $T_{\Uad}$, which helps to avoid unnecessary case distinctions in the proof. Furthermore, since $\Uad \subset L^\infty(\Omega)$, we only have to consider directions in $L^\infty(\Omega)$. In the following, all pointwise expressions and calculations are understood in an almost everywhere sense.
\begin{lemma}\label{lem:G_dir_derivative}
    Let $u \in \Uad$ and define the sets
    \begin{align*}
        S_i &:= \{x \in \Omega: u(x) = u_i\}, \quad i =1,\dots,d,\\
        T_i &:= \{ x \in \Omega: u_i < u(x) < u_{i+1} \}, \quad i=1,\dots,d-1.
    \end{align*}
    The directional derivative of $G$ in direction $v \in T_\Uad(u) \cap L^\infty(\Omega)$ is then given as 
    \begin{equation*}
        \begin{aligned}
            G'(u;v) &= \sum\limits_{i=1}^{d-1} \int\limits_{T_i} \frac{1}{2}(u_i + u_{i+1})v(x) \ \dx\\
                    &\quad + \sum\limits_{i=1}^{d} \left[\ \int\limits_{S_i \cap \{v \geq 0\}} \frac{1}{2}(u_i + u_{i+1})v(x) \dx + \int\limits_{ S_i \cap \{ v < 0 \} } \frac{1}{2}(u_{i-1} + u_i) v(x) \dx \right].
        \end{aligned}
    \end{equation*}
\end{lemma}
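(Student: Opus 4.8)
The plan is to reduce the computation of $G'(u;v)$ to a pointwise statement about the scalar integrand $g$ and then to justify interchanging the directional-derivative limit with the integral. First I would record that, since $u \in \Uad$ means $u(x) \in [u_1,u_d]$ for almost every $x$, the sets $S_1,\dots,S_d$ and $T_1,\dots,T_{d-1}$ cover $\Omega$ up to a null set, so it suffices to evaluate the pointwise directional derivative of $g$ on each of them and then integrate.

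Next I would compute the one-sided derivatives of the scalar convex function $g$. On the interior pieces, i.e.\ for $t$ with $u_i < t < u_{i+1}$, $g$ is affine and hence differentiable with slope $\frac12(u_i+u_{i+1})$, so $g'(t;h) = \frac12(u_i+u_{i+1})h$ for every $h$. At an interior kink $t=u_i$ with $1<i<d$ the right and left slopes are $\frac12(u_i+u_{i+1})$ and $\frac12(u_{i-1}+u_i)$, whence the convex directional derivative is $g'(u_i;h)=\frac12(u_i+u_{i+1})h$ for $h\ge0$ and $g'(u_i;h)=\frac12(u_{i-1}+u_i)h$ for $h<0$. At the endpoints $t=u_1$ and $t=u_d$ only one one-sided slope exists inside $\mathrm{dom}(g)$, namely $\frac12(u_1+u_2)$ from the right and $\frac12(u_{d-1}+u_d)$ from the left; since $v\in T_\Uad(u)$ forces $v\ge0$ on $S_1$ and $v\le0$ on $S_d$, only these admissible one-sided derivatives are ever selected, and the formally undefined opposite branches enter only through the null set $S_1\cap\{v<0\}$ and through $S_d\cap\{v\ge0\}$, on which $v=0$ so that the integrand vanishes.

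Then I would pass from the pointwise to the integral statement, writing $G'(u;v)=\lim_{s\downarrow0}\frac1s\int_\Omega\bigl(g(u(x)+sv(x))-g(u(x))\bigr)\dx$ and justifying the interchange of limit and integral. The key tools are convexity and boundedness: by convexity of $g$ the difference quotients $D_s(x):=\frac1s\bigl(g(u(x)+sv(x))-g(u(x))\bigr)$ are monotone in $s$ and converge pointwise to $g'(u(x);v(x))$ as $s\downarrow0$, while the Lipschitz continuity of $g$ on the bounded interval $[u_1,u_d]$ together with $v\in L^\infty(\Omega)$ yields the uniform bound $|D_s(x)|\le L\,|v(x)|$ with $L:=\max_{1\le i<d}\frac12|u_i+u_{i+1}|$ and $L|v|\in L^1(\Omega)$; dominated convergence then gives $G'(u;v)=\int_\Omega g'(u(x);v(x))\dx$. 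Here I would note that for the directions actually used in \cref{thm:opt_variational}, namely $v=u-\bar u$ with $u\in\Uad$, one has $u+sv=(1-s)\bar u+su\in\Uad$ for $s\in[0,1]$ by convexity of $\Uad$, so $D_s$ is finite and the bound applies for all small $s$.

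Finally I would assemble the result by splitting $\int_\Omega g'(u;v)\dx$ according to the partition: on each $T_i$ the integrand equals $\frac12(u_i+u_{i+1})v$, and on each $S_i$ it equals $\frac12(u_i+u_{i+1})v$ where $v\ge0$ and $\frac12(u_{i-1}+u_i)v$ where $v<0$; collecting these contributions reproduces the stated formula. I expect the interchange of limit and integral to be the main obstacle, since $g$ is nondifferentiable at the points $u_i$ and $u+sv$ may leave $\mathrm{dom}(g)$ for exotic directions; the monotonicity of the difference quotients, the uniform Lipschitz bound on the bounded range of $u$, and the sign constraints encoded in $T_\Uad(u)$ at the faces $S_1$ and $S_d$ are precisely what is needed to control this step.
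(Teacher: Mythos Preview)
Your proposal is correct and follows essentially the same route as the paper: partition $\Omega$ into the sets $T_i$ and $S_i$, compute the pointwise one-sided derivative of $g$ on each (distinguishing the sign of $v$ on $S_i$ and using the tangent-cone constraint on $S_1,S_d$), and integrate. The only stylistic difference is that the paper verifies on each piece that the difference quotient $\rho^{-1}\bigl(g(u(x)+\rho v(x))-g(u(x))\bigr)$ is in fact \emph{equal} to its limit for all sufficiently small $\rho$ (depending on $x$), whereas you invoke the Lipschitz bound on $g|_{[u_1,u_d]}$ and dominated convergence; both justifications work, and your remark that feasibility $u+sv\in\Uad$ is guaranteed for the directions $v=u-\bar u$ actually used in \cref{thm:opt_variational} is a worthwhile observation that the paper leaves implicit.
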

\begin{proof}
    We use the definition of the directional derivative and of the sets $S_i$ and $T_i$ to obtain
    \begin{multline*}
        G'(u;v) := \lim\limits_{\rho \to 0} \frac{1}{\rho} \left( G(u+ \rho v) - G(u) \right)\\
        = \lim\limits_{\rho \to 0} \frac{1}{\rho} \left[ \sum\limits_{i=1}^{d-1} \int\limits_{T_i} ( g(u(x)+\rho v(x)) - g(u(x)) ) \dx + \sum\limits_{i=1}^d \int\limits_{S_i}( g(u(x)+\rho v(x)) - g(u(x)) ) \dx \right] .
    \end{multline*}
     We now make use of our assumption that $v \in T_{\Uad} \cap L^\infty(\Omega)$. For such a $v$, we can find a $\rho > 0$ such that $u + \rho v \in \Uad$. Note that this is a pointwise condition, which we are going to exploit in the following. We have to differentiate between several cases.
    \begin{enumerate}[label=(\roman*)]
        \item First, assume that $x \in T_i$ with $1 \leq i \leq d-1$. For $\rho$ small enough we then get $u(x) + \rho v(x) \in [u_i, u_{i+1}]$. Hence we obtain
            \begin{equation}\label{eq:G_dir1}
                \begin{aligned}[b]
                    g(u(x)+\rho v(x)) - g(u(x)) &= \frac{1}{2}( (u_i + u_{i+1})( u(x)+ \rho v(x)) - u_i u_{i+1} )\\
                    \MoveEqLeft[-1]- \frac{1}{2}( (u_i + u_{i+1})u(x) - u_i u_{i+1} )\\
                    &= \frac{\rho}{2} (u_i + u_{i+1}) v(x).
                \end{aligned}
            \end{equation}
            which yields
            \begin{equation*}
                \lim\limits_{\rho \to 0} \int\limits_{T_i} (g(u(x)+\rho v(x)) - g(u(x))) \dx = \int\limits_{T_i} \frac{1}{2}(u_i + u_{i+1}) v(x) \dx.
            \end{equation*}
        \item Now assume that $x \in S_i$ with $1 < i < d$. Then by definition, $u(x) = u_i$. Here we have to further differentiate between three cases.
            \begin{enumerate}[align=left]
                \item[$v(x)=0$:] Here we obtain $u(x) + \rho v(x) = u(x)$, leading to
                    \begin{equation*}
                        g(u(x) + \rho v(x)) - g(u(x)) = 0.
                    \end{equation*}
                \item[$v(x)>0$:] Here we obtain $u(x) + \rho v(x) \in [u_i, u_{i+1}]$ for $\rho$ small enough, leading to
                    \begin{equation*}
                        g(u(x)+\rho v(x)) - g(u(x)) = 
                        \frac{\rho}{2} (u_i + u_{i+1}) v(x).
                    \end{equation*}
                \item[$v(x)<0$:] Here we obtain $u(x) + \rho v(x) \in [u_{i-1}, u_{i}]$, leading as in \eqref{eq:G_dir1} to
                    \begin{equation*}
                        g(u(x)+\rho v(x)) - g(u(x)) = \frac{\rho}{2} ( u_{i-1} + u_i ) v(x).
                    \end{equation*}
            \end{enumerate}
            Combining all three cases yields 
            \begin{multline*}
                \lim\limits_{\rho \to 0} \frac{1}{\rho} \int\limits_{S_i} (g(u(x)+\rho v(x)) - g(u(x))) \dx \\
                = \int\limits_{ S_i \cap \{v \geq 0\} } \frac{1}{2}(u_i +u_{i+1} ) v(x) \dx + \int\limits_{ S_i \cap \{v < 0\} } \frac{1}{2} (u_{i-1} + u_i) v(x) \dx.
            \end{multline*}
        \item
            We are left with the special cases $x\in S_i$ for $i=1$ and $i = d$. We only consider the case $i=1$ as the case $i=d$ is similar. Hence we assume $x \in S_1$, which implies $u(x) = u_1$. Since $v \in T_\Uad(u)$, we have that $v(x) \geq 0$. If $v(x) > 0$, we obtain for $\rho$ small enough that $u(x) + \rho v(x) \in T_1$ holds, leading to
            \begin{equation*}
                g(u(x) + \rho v(x)) - g(u(x)) = \frac{\rho}{2}(u_1 + u_2) v(x)
            \end{equation*}
            and similar if $v(x) = 0$. This leads to
            \begin{equation*}
                \lim\limits_{\rho \to 0} \frac{1}{\rho} \int\limits_{S_1} ( g(u(x)+\rho v(x)) - g(u(x)) ) \dx= \int\limits_{S_1} \frac{1}{2}(u_1 + u_2) v(x) \dx.
            \end{equation*}
            A similar argument for the remaining case $i=d$ finishes the proof.
            \qedhere
    \end{enumerate}
\end{proof}

\section{Regularity assumption and error estimates}\label{sec:reg_estimates}

We now extend the active set condition from \cite{Wachsmuth:2011a,Wachsmuth:2011b} to the multibang control problem. From \cref{thm:optimality_set}, we see that the optimal control $\bar u$ is not uniquely determined by the adjoint state $\bar p$ on the \emph{singular sets} $Q_{i,i+1}$. We therefore need to control the way in which $\bar p$ ``detaches'' from these sets. This motivates the following assumption.
\renewcommand{\theassumption}{REG}
\begin{assumption}\label{reg}
    For the solution $\bar u$ to \eqref{eq:main} with adjoint state $\bar p = K^\ast(z-K \bar u)$ there exists a constant $c > 0$ and $\kappa > 0$ such that
    \begin{equation*}
        \meas\left( \bigcup\limits_{i=1}^{d-1} \left\{ x \in \Omega: \left|\bar p(x) - \frac{\alpha}{2}(u_i + u_{i+1}) \right| < \eps \right\} \right) \leq c \eps^\kappa
    \end{equation*}
    holds for all $\eps > 0$ small enough.
\end{assumption}
Note that if $\bar u$ satisfies this assumption, the sets $Q_{i,i+1}$ have Lebesgue measure zero. Hence, $\bar u$ is multibang by \cref{thm:optimality_set}. In addition, we have the following result, which is a direct consequence of $\meas \{x\in \Omega:\bar p(x)\in Q_{i,i+1}\} = 0$. 
\begin{lemma}\label{lemma:multi_bang}
    Assume $\bar u$ satisfies \cref{reg}. Then $\bar p(x) \in Q_i$ if and only if $\bar u(x) = u_i$ holds almost everywhere in $\Omega$.
\end{lemma}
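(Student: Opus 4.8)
The plan is to exploit the partition of $\R$ induced by the sets $Q_i$ and $Q_{i,i+1}$ together with the fact that \cref{reg} renders the singular sets negligible. First I would observe that the open intervals $Q_1,\dots,Q_d$ and the threshold points $Q_{1,2},\dots,Q_{d-1,d}$ cover $\R$ with every $q\in\R$ lying in exactly one of them. I would then record that under \cref{reg} the preimage of the thresholds is null: the set $\{x\in\Omega:\bar p(x)=\tfrac{\alpha}{2}(u_i+u_{i+1})\}$ is contained in $\{x:|\bar p(x)-\tfrac\alpha2(u_i+u_{i+1})|<\eps\}$ for every $\eps>0$, so \cref{reg} bounds its measure by $c\eps^\kappa\to0$ as $\eps\to0$; hence $N:=\{x\in\Omega:\bar p(x)\in Q_{i,i+1}\text{ for some }i\}$ satisfies $\meas(N)=0$.

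The forward implication is then immediate from the optimality conditions \eqref{eq:oc}: if $\bar p(x)\in Q_i$, then $\bar u(x)=u_i$. For the reverse implication, the key step is to combine the partition with the distinctness of the $u_i$. For almost every $x$ (namely $x\notin N$), the value $\bar p(x)$ lies in exactly one $Q_j$, so the forward implication gives $\bar u(x)=u_j$. If $\bar u(x)=u_i$ as well, then $u_i=u_j$, and since $u_1<\cdots<u_d$ are pairwise distinct this forces $i=j$ and hence $\bar p(x)\in Q_i$. Both implications thus hold off the null set $N$, i.e., almost everywhere.

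The argument is mostly bookkeeping; the only delicate point is the limit $\eps\to0$ used to conclude that the singular preimage is null, which is precisely where \cref{reg} is invoked, together with the observation that on the complement of this null set the optimality condition assigns the control value unambiguously.
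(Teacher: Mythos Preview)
Your argument is correct and is precisely the reasoning the paper has in mind: the paper does not give a detailed proof but simply remarks that the lemma ``is a direct consequence of $\meas\{x\in\Omega:\bar p(x)\in Q_{i,i+1}\}=0$,'' and your write-up spells out exactly this --- using \cref{reg} with $\eps\to0$ to get the null set, the forward implication from \eqref{eq:oc}, and the partition of $\R$ together with the distinctness of the $u_i$ for the converse.
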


Following \cite[Lemma 1.3]{Hinze:2012}, we can derive a sufficient condition for \cref{reg}.
\begin{theorem}\label{thm:suffreg}
    Suppose that the adjoint state $\bar p\in C^1(\bar \Omega)$ and satisfies 
    \[
        \min\limits_{x \in K_i} |\nabla p(x)| > 0\qquad\text{for all } i= 1,\dots,d-1,
    \]
    where
    \[
        K_i := \left\{ x \in \bar \Omega: p(x) = \frac{\alpha}{2}(u_i + u_{i+1}) \right\}.
    \]
    Then \cref{reg} holds with $\kappa = 1$.
\end{theorem}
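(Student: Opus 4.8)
The plan is to show that each of the finitely many ``critical level'' sets contributes measure $O(\eps)$, which upon summation yields \cref{reg} with $\kappa=1$. Write $c_i := \frac{\alpha}{2}(u_i+u_{i+1})$ and, for $1\le i\le d-1$,
\begin{equation*}
    A_i^\eps := \left\{ x\in\Omega : \left|\bar p(x) - c_i\right| < \eps \right\},
\end{equation*}
so that the set appearing in \cref{reg} is $\bigcup_{i} A_i^\eps$. Since this is a finite union, it suffices to prove $\meas(A_i^\eps)\le C_i\eps$ for each $i$ and all small $\eps$.

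First I would localize around each level set. Since $\bar p\in C^1(\bar\Omega)$, the set $K_i=\{x\in\bar\Omega:\bar p(x)=c_i\}$ is compact and $|\nabla\bar p|$ is continuous, so by hypothesis $\delta_i:=\min_{x\in K_i}|\nabla\bar p(x)|>0$. By continuity of $\nabla\bar p$ there is an open neighborhood $U_i\supset K_i$ on which $|\nabla\bar p|\ge \delta_i/2$. A compactness argument then shows $A_i^\eps\subset U_i$ for all sufficiently small $\eps$: otherwise one finds $\eps_n\to0$ and $x_n\in A_i^{\eps_n}\setminus U_i$; passing to a convergent subsequence $x_n\to x^\ast\in\bar\Omega\setminus U_i$ and using $|\bar p(x_n)-c_i|<\eps_n$ gives $\bar p(x^\ast)=c_i$, i.e.\ $x^\ast\in K_i\subset U_i$, a contradiction.

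The core estimate is the bound on $\meas(A_i^\eps)$ once it is confined to $U_i$. I would cover the compact set $K_i$ by finitely many open balls $B_1,\dots,B_N\subset U_i$ such that on each $B_j$ some fixed partial derivative is bounded away from zero, say $|\partial_{x_{k(j)}}\bar p|\ge \delta_i/2$ on $B_j$ (possible since $|\nabla\bar p|\ge\delta_i/2$ there). On $B_j$, for fixed values of the remaining coordinates, the map in the $x_{k(j)}$-variable is strictly monotone with derivative at least $\delta_i/2$ in modulus, so the set of such values with $|\bar p-c_i|<\eps$ is an interval of length at most $4\eps/\delta_i$. Fubini's theorem then gives $\meas(A_i^\eps\cap B_j)\le (4\eps/\delta_i)\,M_j$ with $M_j$ the $(n-1)$-dimensional measure of the projection of $B_j$ off the $x_{k(j)}$-axis. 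Summing over $j$ and using $A_i^\eps\subset U_i\subset\bigcup_j B_j$ yields $\meas(A_i^\eps)\le C_i\eps$, and summing over $i$ gives the claim with $c=\sum_i C_i$ and $\kappa=1$.

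The main obstacle I anticipate is the bookkeeping in the last step: ensuring that $A_i^\eps$ is contained in the \emph{finite} union of balls on which the one-dimensional monotonicity holds, so that the local Fubini bounds can be summed, while keeping the distinguished coordinate direction $k(j)$ consistent ball by ball. An alternative that sidesteps the explicit covering is the coarea formula
\begin{equation*}
    \int_{A_i^\eps}|\nabla\bar p|\dx=\int_{c_i-\eps}^{c_i+\eps}\mathcal{H}^{n-1}\!\left(\bar p^{-1}(t)\cap A_i^\eps\right)\dt,
\end{equation*}
bounding the left-hand side below by $(\delta_i/2)\meas(A_i^\eps)$ and the level-set areas on the right uniformly, using that the $\bar p^{-1}(t)\cap\overline{U_i}$ are $C^1$ hypersurfaces with $|\nabla\bar p|$ bounded below; this trades the covering bookkeeping for a uniform bound on $\mathcal{H}^{n-1}$ of the nearby level sets.
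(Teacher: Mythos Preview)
Your proposal is correct. Your primary route---covering $K_i$ by finitely many balls on which one partial derivative has a fixed sign, then applying Fubini slice by slice---is a genuinely different argument from the paper's, which goes directly through the co-area formula (your ``alternative''). The paper asserts by a continuity argument that for $t$ near $c_i$ one has $|\nabla \bar p|\ge c_0$ on $F_t=\bar p^{-1}(t)$ and $\mathcal H^{n-1}(F_t)\le C$, then integrates $\1_{E_i}|\nabla\bar p|$ via co-area to get $c_0\,\meas(E_i)\le 2C\eps$. Your covering/Fubini approach buys elementariness: it needs only $C^1$ regularity, compactness, and one-variable monotonicity, avoiding the co-area formula and the separate bound on $\mathcal H^{n-1}$ of nearby level sets. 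The price is exactly the bookkeeping you flag---one must argue (by the same compactness/contradiction step you used for $U_i$) that $A_i^\eps\subset\bigcup_j B_j$ for small $\eps$, not merely $A_i^\eps\subset U_i$; your chain ``$A_i^\eps\subset U_i\subset\bigcup_j B_j$'' as written is backwards, since the balls were chosen inside $U_i$, but the fix is immediate. The paper's co-area route is shorter and more uniform in $i$, at the cost of invoking a heavier tool and leaving the uniform $\mathcal H^{n-1}$ bound to the reader.
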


\begin{proof}
    Define for $t\in\R$ the level sets $F_t := \{ x \in \bar \Omega: p(x) = t \}$. Now we use a continuity argument to obtain constants $\eps_0, c_0, C > 0$ such that for all $|t - \frac{\alpha}{2}(u_i + u_{i+1})|\leq \eps_0$ and all $1\leq i < d$ there holds
    \[
        |\nabla p(x)| \geq c_0 >0, \quad \mathcal{H}^{n-1}(F_t) \leq C,
    \]
    where $\mathcal{H}^{n-1}$ is the $(n-1)$-dimensional Hausdorff measure. 
    In the following, we denote by $\1_C$ the characteristic function of the set $C$, i.e., $\1_C(x) =1$ if $x\in C$ and $0$ else. 
    We now use the co-area formula
    \[
        \int\limits_\Omega h(x) |\nabla p(x)| \dx = \int\limits_{- \infty}^\infty \left(\ \int\limits_{p^{-1}(t)} h(x) \mathrm{d} {\mathcal{H}^{n-1}}(x) \right) \dt
    \]
    with the function
    \[
        h(x) := \1_{ E_i }, \quad E_i := \left\{ x \in \Omega: \left|p(x) - \frac{\alpha}{2}(u_i + u_{i+1}) \right| \leq \eps \right\} ,
    \]
    to obtain for all $1\leq i < d$ and $0 < \eps \leq \eps_0$ that
    \[
        c_0 \meas \left( E_i \right) \leq \int\limits_{E_i} |\nabla p(x)| \dx = \int\limits_{-\eps}^\eps \mathcal{H}^{n-1} \left( F_{t - \frac{\alpha}{2}(u_i + u_{i+1})}\right) \dt \leq 2 C \eps
    \]
    holds. Since this holds for all $1 \leq i < d$, the \cref{reg} now follows with $\kappa = 1$.
\end{proof}

\bigskip

We now establish error estimates for the approximation \eqref{eq:main_tik} of \eqref{eq:main}.
For this purpose, we first derive a stronger version of \cref{thm:opt_variational}. The next result, which is similar to ones in \cite{PoernerWachsmuth2016,PoernerWachsmuth2018}, is the most important tool in the convergence analysis.
\begin{lemma}\label{lem:improved_first_order}
    Assume that the solution $\bar u$ to \eqref{eq:main} satisfies \cref{reg}. Then,
    \begin{equation*}
        (-\bar p,u - \bar u)_{L^2(\Omega)} + \alpha G'(\bar u; u - \bar u) \geq c_A \|u-\bar u\|^{1 + \frac{1}{\kappa}}_{L^1(\Omega)} \qquad \forall u \in \Uad
    \end{equation*}
    with a constant $c_A:=c_A(\kappa) > 0$.
\end{lemma}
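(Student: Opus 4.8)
The plan is to reduce the inequality to a pointwise estimate for the integrand on the left-hand side and then to exploit \cref{reg} through a super-level-set argument. Writing $v := u - \bar u$, I would first note that $v \in T_\Uad(\bar u)\cap L^\infty(\Omega)$, so that \cref{lem:G_dir_derivative} applies, and that since $\bar u$ is multibang by \cref{lemma:multi_bang} the sets $T_i$ have measure zero, whence $\Omega = \bigcup_{i=1}^d S_i$ up to a null set. Evaluating the directional-derivative formula at $\bar u$ then collapses to the sum over the $S_i$, and combining it with the $L^2$ pairing gives the integrand of the left-hand side explicitly on each piece: on $S_i\cap\{v\geq 0\}$ it equals $\bigl(\tfrac\alpha2(u_i+u_{i+1})-\bar p\bigr)v$, while on $S_i\cap\{v<0\}$ it equals $\bigl(\tfrac\alpha2(u_{i-1}+u_i)-\bar p\bigr)v$, with the endpoint indices $i=1$ and $i=d$ producing only one of the two terms.

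The key observation I would use next is that, by \cref{lemma:multi_bang}, $\bar u=u_i$ on $S_i$ forces $\bar p\in Q_i$ there, so that $\tfrac\alpha2(u_{i-1}+u_i)<\bar p<\tfrac\alpha2(u_i+u_{i+1})$. Both expressions above are therefore products of two factors of equal sign and can be rewritten as $\delta(x)\,|v(x)|$, where $\delta(x)$ is the distance from $\bar p(x)$ to whichever of the two adjacent thresholds the direction $v$ points toward. Since those thresholds belong to $\{\tfrac\alpha2(u_j+u_{j+1}):1\leq j<d\}$, one has $\delta(x)\geq d(x)$ with $d(x):=\min_{1\leq j<d}\bigl|\bar p(x)-\tfrac\alpha2(u_j+u_{j+1})\bigr|$. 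Integrating then yields the pointwise lower bound
\begin{equation*}
  (-\bar p, v)_{L^2(\Omega)} + \alpha\, G'(\bar u; v) \;\geq\; \int_\Omega d(x)\,|v(x)|\dx ,
\end{equation*}
which reduces the claim to a lower bound for $\int_\Omega d\,|v|\dx$ in terms of $\|v\|_{L^1(\Omega)}$.

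The final and most delicate step is this measure-theoretic estimate, where \cref{reg} enters. For $\eps>0$ I would set $E_\eps := \{x:d(x)<\eps\}$, which is precisely the union of strips appearing in \cref{reg}, so $\meas(E_\eps)\leq c\,\eps^\kappa$ for $\eps$ small. Splitting the integral, using $d\geq\eps$ on $\Omega\setminus E_\eps$ and $\|v\|_{L^\infty(\Omega)}\leq u_d-u_1$ on $E_\eps$, gives
\begin{equation*}
  \int_\Omega d\,|v|\dx \;\geq\; \eps\int_{\Omega\setminus E_\eps}|v|\dx \;\geq\; \eps\bigl(\|v\|_{L^1(\Omega)} - (u_d-u_1)\,c\,\eps^\kappa\bigr).
\end{equation*}
Maximizing the right-hand side over $\eps$ by choosing $\eps^\kappa$ proportional to $\|v\|_{L^1(\Omega)}$ produces exactly the exponent $1+\tfrac1\kappa$ together with a constant $c_A=c_A(\kappa)>0$.

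I expect the obstacle to be two-fold. Conceptually, the heart of the argument is recognizing that the variational-inequality integrand is governed precisely by the distance of $\bar p$ to the singular thresholds $\tfrac\alpha2(u_i+u_{i+1})$, which is exactly the quantity controlled by \cref{reg}; the sign bookkeeping across the cases $v\geq 0$, $v<0$ and the endpoint indices must be done carefully to secure $\delta\geq d$. Technically, the optimal $\eps$ is admissible only while $\|v\|_{L^1(\Omega)}$ is small enough that $\eps\leq\eps_0$ (the threshold below which \cref{reg} holds), so to obtain the estimate for \emph{all} $u\in\Uad$ I would add a separate, easier argument in the regime of large $\|v\|_{L^1(\Omega)}$: there one fixes $\eps=\eps_0$ and uses that $\Uad$ is bounded in $L^1(\Omega)$ (since $\Omega$ is bounded and $\Uad\subset L^\infty(\Omega)$) to absorb the remainder into a possibly smaller constant $c_A$, so that the stated estimate holds uniformly.
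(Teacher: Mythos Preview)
Your proposal is correct and follows essentially the same route as the paper: expand the left-hand side via \cref{lem:G_dir_derivative}, use \cref{lemma:multi_bang} to secure the signs, reduce to an integral weighted by the distance of $\bar p$ to the thresholds, split on an $\eps$-sublevel set, apply \cref{reg}, and optimize over $\eps$. The only cosmetic difference is that you package the argument through the single distance function $d(x)$, whereas the paper works with explicitly shrunk intervals $Q_i^\eps\subset Q_i$; your extra remark about handling large $\|v\|_{L^1(\Omega)}$ via a fixed $\eps_0$ is in fact a detail the paper's proof glosses over.
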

\begin{proof}
    First, recall that \cref{reg} implies that $\bar u$ has a multibang structure. Furthermore, using \cref{lemma:multi_bang} we obtain with the definition of $Q_i$ and $S_i$ in \cref{thm:optimality_set,lem:G_dir_derivative}, respectively, that $\bar u(x) \in S_i$ if and only if $\bar p(x) \in Q_i$. Now we use \cref{lem:G_dir_derivative} and the fact that $u - \bar u \in T_\Uad(\bar u)$ to compute
    \enlargethispage*{1cm}
    \begin{multline*}
        (-\bar p,u - \bar u)_{L^2(\Omega)} + \alpha G'(\bar u; u-\bar u) \\
        \begin{aligned}
            &= \int\limits_{\{\bar p \in Q_1\} } \left( -\bar p(x) + \frac{\alpha}{2} (u_1 + u_2) \right)(u(x) - \bar u(x)) \dx \\
            \MoveEqLeft[-1]+ \int\limits_{\{\bar p \in Q_d\} } \left(- \bar p(x)+ \frac{\alpha}{2} (u_{d-1} + u_d) \right)(u(x) - \bar u)(x) \dx\\
            \MoveEqLeft[-1] +\sum\limits_{i=2}^{d-1} \int\limits_{ \{\bar p \in Q_i\} \cap \{ u-\bar u \geq 0 \} } \left(-\bar p(x) + \frac{\alpha}{2}(u_i + u_{i+1}) \right) (u(x) - \bar u(x)) \dx\\
            \MoveEqLeft[-1] + \sum\limits_{i=2}^{d-1} \int\limits_{ \{\bar p \in Q_i\} \cap \{ u-\bar u < 0 \} } \left(-\bar p(x) + \frac{\alpha}{2}(u_{i-1} + u_{i}) \right) (u (x)- \bar u(x)) \dx.
        \end{aligned}
    \end{multline*}
    Here we have abbreviated the sets $\{\bar p \in Q_1\} :=\{ x \in \Omega: \bar p(x) \in Q_1 \}$ and similar for the other sets. Recall that by definition, $\bar p(x) \in Q_1$ implies that $-\bar p(x) + \frac{\alpha}{2}(u_1 + u_2) > 0$. Furthermore, we know that $\bar u(x) = u_1$, leading to $u(x) - \bar u(x) = u(x)-u_1 \geq 0$. We similarly obtain on $Q_d$ that $-\bar p(x) + \frac{\alpha}{2}(u_{d-1} + u_d) < 0$ and $u(x) - \bar u(x) = u(x) - u_d \leq 0$.
    Finally, if $\bar p(x) \in Q_i$ for $1 < i < d$, we obtain that
    \begin{equation*}
        \frac{\alpha}{2}(u_{i-1} + u_i) < \bar p(x) < \frac{\alpha}{2}(u_i + u_{i+1}),
    \end{equation*}
    which leads to
    \begin{equation*}
        - \bar p(x) + \frac{\alpha}{2}(u_i + u_{i+1}) > 0 \quad \text{and} \quad - \bar p(x) + \frac{\alpha}{2}(u_{i-1} + u_i) < 0.
    \end{equation*}
    This allows us to write
    \begin{multline*}
        (-\bar p,u - \bar u)_{L^2(\Omega)} + \alpha G'(\bar u; u-\bar u)\\
        \begin{aligned}
            &= \int\limits_{\{\bar p \in Q_1\} } \left| -\bar p(x) + \frac{\alpha}{2} (u_1 + u_2) \right||u(x) - \bar u(x)| \dx \\
            \MoveEqLeft[-1]+ \int\limits_{\{\bar p \in Q_d\} } \left|- \bar p(x)+ \frac{\alpha}{2} (u_{d-1} + u_d) \right||u(x) - \bar u(x)| \dx\\
            \MoveEqLeft[-1]+ \sum\limits_{i=2}^{d-1} \int\limits_{ \{\bar p \in Q_i\} \cap \{ u-\bar u \geq 0 \} } \left|-\bar p (x)+ \frac{\alpha}{2}(u_i + u_{i+1}) \right| |u(x) - \bar u(x)| \dx\\
            \MoveEqLeft[-1] + \sum\limits_{i=2}^{d-1} \int\limits_{ \{\bar p \in Q_i\} \cap \{ u-\bar u < 0 \} } \left|-\bar p(x) + \frac{\alpha}{2}(u_{i-1} + u_{i}) \right| |u(x) - \bar u(x)| \dx.
        \end{aligned}
    \end{multline*}
    
    Now let $\eps > 0$ and consider the set
    \begin{equation*}
        Q_1^\eps :=\left\{ q: q \leq \frac{\alpha}{2}(u_1 + u_2) - \eps \right\} \subset Q_1.
    \end{equation*}
    Let $\bar p(x) \in Q_1^\eps$. Together with $-\bar p(x) + \frac{\alpha}{2}(u_1 + u_2) > 0$, this implies that
    \begin{equation*}
        \left|-\bar p(x) + \frac{\alpha}{2}(u_1 + u_2) \right| = -\bar p(x) + \frac{\alpha}{2}(u_1 + u_2) \geq \eps,
    \end{equation*}
    leading to
    \begin{equation*}
        \begin{aligned}
            \int\limits_{\{\bar p \in Q_1\} } \left| -\bar p + \frac{\alpha}{2} (u_1 + u_2) \right||u - \bar u| \dx 
            &\geq \int\limits_{\{\bar p \in Q_1^\eps\} } \left| -\bar p + \frac{\alpha}{2} (u_1 + u_2) \right||u - \bar u| \dx\\
            &\geq \eps \int\limits_{\{\bar p \in Q_1^\eps\} } |u - \bar u| \dx.
        \end{aligned}
    \end{equation*}
    We similarly define
    \begin{equation*}
        Q_d^\eps := \left\{ q \geq \frac{\alpha}{2}(u_{d-1} + u_d) + \eps \right\},
    \end{equation*}
    leading to
    \begin{equation*}
        \int\limits_{\{\bar p \in Q_d\} } \left|- \bar p(x)+ \frac{\alpha}{2} (u_{d-1} + u_d) \right||u(x) - \bar u(x)| \dx \geq \eps \int\limits_{\{\bar p \in Q_d^\eps\} } |u(x) - \bar u(x)| \dx,
    \end{equation*}
    as well as for $1<i<d$
    \begin{equation*}
        Q_i^\eps := \left\{ \eps + \frac{\alpha}{2}(u_{i-1} + u_i) \leq q \leq \frac{\alpha}{2}(u_i + u_{i+1}) - \eps \right\} \subset Q_i.
    \end{equation*}
    The latter leads to
    \begin{align*}
        \left|-\bar p(x) + \frac{\alpha}{2}(u_i + u_{i+1})\right| &= -\bar p(x) + \frac{\alpha}{2}(u_i + u_{i+1}) \geq \eps,\\
        \left|-\bar p(x) + \frac{\alpha}{2}(u_{i-1} + u_i)\right| &= \phantom{-}\bar p(x) - \frac{\alpha}{2}(u_{i-1} + u_i) \geq \eps
    \end{align*}
    and therefore
    \begin{multline*}
        \int\limits_{ \{\bar p \in Q_i\} \cap \{ u-\bar u \geq 0 \} } \left|-\bar p(x) + \frac{\alpha}{2}(u_i + u_{i+1}) \right| |u(x) - \bar u(x)| \dx\\
        + \int\limits_{ \{\bar p \in Q_i\} \cap \{ u-\bar u < 0 \} } \left|-\bar p(x) + \frac{\alpha}{2}(u_{i-1} + u_{i}) \right| |u(x) - \bar u(x)| \dx\\
        \begin{aligned}[t]
            &\geq \eps \int\limits_{ \{\bar p \in Q_i^\eps\} \cap \{ u-\bar u \geq 0 \} } |u(x) - \bar u(x)| \dx + \eps \int\limits_{ \{\bar p \in Q_i^\eps\} \cap \{ u-\bar u < 0 \} } |u(x)-\bar u(x)| \dx\\
            &= \eps \int\limits_{ \{\bar p \in Q_i^\eps\} } |u(x)-\bar u(x)| \dx.
        \end{aligned}
    \end{multline*}

    We now combine all these estimates to obtain
    \begin{multline*}
        (-\bar p,u - \bar u)_{L^2(\Omega)} + \alpha G'(\bar u; u - \bar u) \\
        \begin{aligned}
            &\geq \eps \sum\limits_{i=1}^d \int\limits_{ \{ \bar p \in Q_i^\eps \} } |u(x) - \bar u(x)| \dx\\
            &=\eps \sum\limits_{i=1}^d \left( \int\limits_{ \{ \bar p \in Q_i \} } |u(x) - \bar u(x)| \dx - \int\limits_{ \{ \bar p \in Q_i \} \setminus \{ \bar p \in Q_i^\eps \} } |u(x) - \bar u(x)| \dx \right)\\
            &= \eps \|u - \bar u\|_{L^1(\Omega)} - \eps \sum\limits_{i=1}^d \int\limits_{ \{ \bar p \in Q_i \} \setminus \{ \bar p \in Q_i^\eps \} } |u(x) - \bar u(x)| \dx\\
            &\geq \eps \|u - \bar u\|_{L^1(\Omega)} - \eps \|u - \bar u\|_{L^\infty(\Omega)} \sum\limits_{i=1}^d \int\limits_{ \{ \bar p \in Q_i \} \setminus \{ \bar p \in Q_i^\eps \} } 1 \dx,
        \end{aligned}
    \end{multline*}
    where we have used the $L^\infty$-boundedness of $u - \bar u$ in the last step. We now use \cref{reg} to estimate the remaining sum, yielding
    \begin{align*}
        \sum\limits_{i=1}^d \int\limits_{ \{ \bar p \in Q_i \} \setminus \{ \bar p \in Q_i^\eps \} } 1 \dx = \meas\left( \bigcup\limits_{i=1}^{d-1} \left\{x\in\Omega: \left|\bar p(x) - \frac{\alpha}{2}(u_i + u_{i+1}) \right| < \eps \right\} \right) \leq c \eps^\kappa.
    \end{align*}
    Summarizing, we have for a constant $c > 1$ that
    \begin{equation*}
        (-\bar p,u - \bar u)_{L^2(\Omega)} + \alpha G'(\bar u; u - \bar u) \geq \eps \|u - \bar u\|_{L^1(\Omega)} -c \eps^{\kappa + 1},
    \end{equation*}
    and hence setting
    \begin{equation*}
        \eps := c^{- \frac{2}{\kappa}} \|u - \bar u\|_{L^1(\Omega)}^{\frac{1}{\kappa}}
    \end{equation*}
    finishes the proof.
\end{proof}

We now have everything at hand to prove approximation error estimates.
\begin{theorem}\label{thm:conv_rates}
    Let $\bar u$ be a solution of \eqref{eq:main} with corresponding state $\bar y := K\bar u$ and assume that \cref{reg} is satisfied. Furthermore, let $u_\gamma$ be the solution of \eqref{eq:main_tik} for $\gamma>0$ with corresponding state $y_\gamma := K u_\gamma$. Then there exists a constant $c> 0$ such that 
    \begin{equation*}
        \frac{1}{\gamma}\|y_\gamma - \bar y\|_Y^2 + \frac{1}{\gamma}\|u_\gamma - \bar u\|_{L^1(\Omega)}^{1 + \frac{1}{\kappa}} + \|u_\gamma - \bar u\|_{L^2(\Omega)}^2 \leq c \gamma^\kappa.
    \end{equation*}
\end{theorem}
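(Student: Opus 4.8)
The plan is to combine the sharpened first-order inequality of \cref{lem:improved_first_order} with the variational inequality for $u_\gamma$ from \cref{thm:opt_variational}, exploiting the monotonicity of the subdifferential of the convex functional $G$. First I would test the improved inequality with $u = u_\gamma \in \Uad$, obtaining
\begin{equation*}
    (-\bar p, u_\gamma - \bar u)_{L^2(\Omega)} + \alpha G'(\bar u; u_\gamma - \bar u) \geq c_A \|u_\gamma - \bar u\|_{L^1(\Omega)}^{1 + \frac1\kappa},
\end{equation*}
and test the variational inequality for $u_\gamma$ with $u = \bar u \in \Uad$, obtaining
\begin{equation*}
    (-p_\gamma + \gamma u_\gamma, \bar u - u_\gamma)_{L^2(\Omega)} + \alpha G'(u_\gamma; \bar u - u_\gamma) \geq 0.
\end{equation*}
Adding these and using that convexity of $G$ yields $G'(\bar u; u_\gamma - \bar u) + G'(u_\gamma; \bar u - u_\gamma) \leq 0$ (since $G'(u;v) \leq G(u+v) - G(u)$ for convex $G$ and both $\bar u, u_\gamma$ lie in $\Uad = \mathrm{dom}(G)$), the directional-derivative terms drop out, leaving an inequality in the inner products alone.

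Writing $w := u_\gamma - \bar u$ and using $p_\gamma - \bar p = -K^\ast K w$, the combined inner products collapse to $-\|y_\gamma - \bar y\|_Y^2 - \gamma (u_\gamma, w)_{L^2(\Omega)}$, so that
\begin{equation*}
    \|y_\gamma - \bar y\|_Y^2 + c_A \|w\|_{L^1(\Omega)}^{1 + \frac1\kappa} \leq -\gamma (u_\gamma, w)_{L^2(\Omega)}.
\end{equation*}
The key move is then to split $-\gamma (u_\gamma, w)_{L^2(\Omega)} = -\gamma \|w\|_{L^2(\Omega)}^2 - \gamma (\bar u, w)_{L^2(\Omega)}$, moving the first term to the left so that the $L^2$ contribution is retained, and to bound the remainder by $L^\infty$--$L^1$ duality: since $\bar u \in \Uad$ we have $\|\bar u\|_{L^\infty(\Omega)} \leq M := \max\{|u_1|, |u_d|\}$ and hence $-\gamma (\bar u, w)_{L^2(\Omega)} \leq \gamma M \|w\|_{L^1(\Omega)}$. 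This yields
\begin{equation*}
    \|y_\gamma - \bar y\|_Y^2 + c_A \|w\|_{L^1(\Omega)}^{1 + \frac1\kappa} + \gamma \|w\|_{L^2(\Omega)}^2 \leq \gamma M \|w\|_{L^1(\Omega)}.
\end{equation*}

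Finally I would extract the rates by a scalar argument. Dropping the two nonnegative terms $\|y_\gamma - \bar y\|_Y^2$ and $\gamma\|w\|_{L^2(\Omega)}^2$ gives $c_A \|w\|_{L^1(\Omega)}^{1/\kappa} \leq \gamma M$, i.e. $\|w\|_{L^1(\Omega)} \leq (\gamma M / c_A)^\kappa = \mathcal{O}(\gamma^\kappa)$. Reinserting this bound for $\|w\|_{L^1(\Omega)}$ on the right-hand side and dividing by $\gamma$ then bounds each of the three terms $\frac1\gamma\|y_\gamma - \bar y\|_Y^2$, $\frac1\gamma\|w\|_{L^1(\Omega)}^{1+1/\kappa}$ (using in addition $\|w\|_{L^1(\Omega)}^{1+1/\kappa} \leq (\,\mathcal{O}(\gamma^\kappa)\,)^{1+1/\kappa} = \mathcal{O}(\gamma^{\kappa+1})$) and $\|w\|_{L^2(\Omega)}^2$ by a constant times $\gamma^\kappa$, which is the claimed estimate. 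I expect the main obstacle to be the correct bookkeeping in the $\gamma$-term: one must split off $-\gamma\|w\|_{L^2(\Omega)}^2$ \emph{before} applying the $L^\infty$--$L^1$ estimate, since bounding $-\gamma(u_\gamma, w)_{L^2(\Omega)}$ directly would only give boundedness of $\|w\|_{L^2(\Omega)}$ rather than a rate, whereas retaining the quadratic term and feeding back the superlinear $L^1$ bound is precisely what upgrades the crude energy estimate to the sharp $\mathcal{O}(\gamma^\kappa)$ convergence.
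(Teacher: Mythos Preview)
Your proof is correct and follows essentially the same route as the paper: combine \cref{lem:improved_first_order} with the variational inequality for $u_\gamma$, use the monotonicity inequality $G'(\bar u; u_\gamma - \bar u) + G'(u_\gamma; \bar u - u_\gamma) \leq 0$, compute $(-\bar p + p_\gamma, u_\gamma - \bar u) = -\|y_\gamma - \bar y\|_Y^2$, and split $-\gamma(u_\gamma,w) = -\gamma\|w\|_{L^2}^2 - \gamma(\bar u,w)$ to retain the $L^2$ term. The only (inessential) difference is the endgame: the paper absorbs $\gamma M\|w\|_{L^1}$ by Young's inequality $ab \leq \tfrac{c_A}{2}a^{1+1/\kappa} + C b^{\kappa+1}$, whereas you first isolate $c_A\|w\|_{L^1}^{1/\kappa} \leq \gamma M$ and then reinsert; both yield the same $\mathcal{O}(\gamma^{\kappa+1})$ bound on the right-hand side before dividing by~$\gamma$.
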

\begin{proof}
    First note that $G$ is a convex function and hence that
    \begin{equation*}\label{eq:direction_positive}
        G'(\bar u; u_\gamma - \bar u) + G'(u_\gamma; \bar u - u_\gamma) \leq 0.
    \end{equation*}
    We thus obtain from \cref{thm:opt_variational} and \cref{lem:improved_first_order} that
    \begin{align*}
        (-\bar p,u - \bar u)_{L^2(\Omega)} + \alpha G'(\bar u; u - \bar u) \geq c_A \|u_\gamma - \bar u\|_{L^1(\Omega)}^{1 + \frac{1}{\kappa}} \quad &\forall u \in \Uad,\\
        (-p_\gamma,u - u_\gamma)_{L^2(\Omega)}+ \alpha G'(u_\gamma; u - u_\gamma) + \gamma (u_\gamma, u - u_\gamma)_{L^2(\Omega)} \geq 0 \quad &\forall u \in \Uad.
    \end{align*}
    Inserting $u=u_\gamma$ and $u=\bar{u}$ into two above inequalities, respectively, and then adding both yields
    \begin{multline*}
        (-\bar p + p_\gamma, u_\gamma - \bar u )_{L^2(\Omega)} + \alpha( G'(\bar u; u_\gamma - \bar u) + G'(u_\gamma; \bar u - u_\gamma ))
        + \gamma (u_\gamma, \bar u - u_\gamma)_{L^2(\Omega)}\\
        \geq c_A \|u_\gamma - \bar u\|_{L^1(\Omega)}^{1 + \frac{1}{\kappa}}.
    \end{multline*}
    We now use the definition of $\bar p = K^\ast(z - K \bar u)$ and $p_\gamma = K^\ast(z - K u_\gamma)$ to deduce that
    \begin{equation*}
        (-\bar p + p_\gamma, u_\gamma - \bar u )_{L^2(\Omega)} = - \| y_\gamma - \bar y\|^2_{Y}.
    \end{equation*}
    Hence, by adding $\gamma \|\bar u - u_\gamma\|^2_{L^2(\Omega)}$ to the inequality above and rearranging terms, we obtain that
    \begin{equation*}
        \begin{aligned}
            \|y_\gamma - \bar y\|_Y^2 + c_A \|u_\gamma - \bar u\|_{L^1(\Omega)}^{1 + \frac{1}{\kappa}} + \gamma \|u_\gamma - \bar u\|_{L^2(\Omega)}^2
            &\leq \alpha( G'(\bar u; u_\gamma - \bar u) + G'(u_\gamma; \bar u - u_\gamma )) \\
            \MoveEqLeft[-1]+ \gamma (\bar u, \bar u - u_\gamma)_{L^2(\Omega)}\\
            &\leq \gamma (\bar u, \bar u - u_\gamma)_{L^2(\Omega)}\\
            &\leq c \gamma \|u_\gamma - \bar u\|_{L^1(\Omega)}\\
            &\leq \frac{c_A}{2}\|u_\gamma - \bar u\|_{L^1(\Omega)}^{1 + \frac{1}{\kappa}} + c \gamma^{\kappa+1},
        \end{aligned}
    \end{equation*}
    where we have used Young's inequality in the last step. The stated inequality now follows immediately. 
\end{proof}

\section{Discretization error estimates}\label{sec:disc_error}

In practice, the exact operator $K$ is not realizable, and a discretization 
$K_h: L^2(\Omega) \to Y_h$ with finite dimensional range $Y_h$ must be employed. Denote by $u_{\gamma,h}$ the solution of the discrete problem
\begin{equation}\label{eq:var_discrete}\tag{$P_{\gamma,h}$}
    \min\limits_{u \in \Uad} \frac{1}{2}\|K_h u - z\|_Y^2 + \alpha G(u) + \frac{\gamma}{2} \|u\|_{L^2(\Omega)}^2
\end{equation}
with corresponding state $y_{\gamma,h}:=K_hu_{\gamma,h}$ and adjoint state $p_{\gamma,h}:=K_h^*(z-y_{\gamma,h})$.
If $K$ is the solution operator of an elliptic partial differential equation and $K_h$ its finite element discretization as in the next section, \eqref{eq:var_discrete} can be interpreted as a variational discretization \cite{Hinze2005,HinzeMatthes2009}.

We assume that for all $h>0$, the estimate
\begin{equation}\label{eq:discrete_operator_ass}
    \|(K -K_h) u_{\gamma,h}\|_Y + \|(K^\ast - K_h^\ast)(y_{\gamma,h} - z)\|_{L^2(\Omega)} \leq \delta(h),
\end{equation}
holds uniformly for all $\gamma>0$ with a monotonically increasing function $\delta: \R^+_0 \to \R$ such that $\delta(0) = 0$. Note that this approximation condition only needs to be satisfied for the solutions to the \emph{discretized} problem \eqref{eq:var_discrete}. However, as in \cite{Wachsmuth2013} the condition can also be replaced by a corresponding uniform condition for the solution to the continuous problem \eqref{eq:main_tik}. 

Now, we follow \cite[Proposition 1.8]{Wachsmuth2013} and estimate the discretization error for the solution to \eqref{eq:main_tik}.
\begin{theorem}\label{thm:disc_error}
    For all $\gamma >0$ and $h\geq 0$ there holds
    \begin{equation*}
        \|y_\gamma - y_{\gamma,h}\|_Y^2 + \gamma \|u_\gamma - u_{\gamma,h}\|_{L^2(\Omega)}^2 \leq (1 + \gamma^{-1})\delta(h)^2.
    \end{equation*}
\end{theorem}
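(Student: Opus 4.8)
The plan is to mirror the two-inequality monotonicity argument used in the proof of \cref{thm:conv_rates}, testing the first-order variational inequalities for \eqref{eq:main_tik} and \eqref{eq:var_discrete} against each other's solutions. For the continuous problem, \cref{thm:opt_variational} supplies
\[
    (-p_\gamma + \gamma u_\gamma, u - u_\gamma)_{L^2(\Omega)} + \alpha G'(u_\gamma; u - u_\gamma) \geq 0 \qquad \forall u \in \Uad,
\]
and the identical argument applied to \eqref{eq:var_discrete} gives the analogous inequality for $u_{\gamma,h}$ with the \emph{discrete} adjoint $p_{\gamma,h} = K_h^*(z - y_{\gamma,h})$. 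First I would test the continuous inequality with $u = u_{\gamma,h}$ and the discrete one with $u = u_\gamma$, add them, and use convexity of $G$ in the form $G'(u_\gamma; u_{\gamma,h} - u_\gamma) + G'(u_{\gamma,h}; u_\gamma - u_{\gamma,h}) \leq 0$ to eliminate the directional-derivative terms. Rearranging the remaining inner products then yields
\[
    \gamma \|u_\gamma - u_{\gamma,h}\|_{L^2(\Omega)}^2 \leq (p_\gamma - p_{\gamma,h},\, u_\gamma - u_{\gamma,h})_{L^2(\Omega)}.
\]

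The central step is to expand the adjoint difference so that the discretization error surfaces. I would insert and remove $K^*(z - y_{\gamma,h})$ to write $p_\gamma - p_{\gamma,h} = K^*(y_{\gamma,h} - y_\gamma) + (K^* - K_h^*)(z - y_{\gamma,h})$. Pairing the first summand with $u_\gamma - u_{\gamma,h}$ and shifting $K^*$ to the $Y$-side of the inner product produces $(y_{\gamma,h} - y_\gamma,\, K(u_\gamma - u_{\gamma,h}))_Y$; here the only delicate point is that $K u_{\gamma,h} \neq y_{\gamma,h} = K_h u_{\gamma,h}$, so I must substitute $K(u_\gamma - u_{\gamma,h}) = (y_\gamma - y_{\gamma,h}) - (K - K_h)u_{\gamma,h}$. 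This generates exactly $-\|y_\gamma - y_{\gamma,h}\|_Y^2$ together with a remainder involving $(K - K_h)u_{\gamma,h}$, and after collecting terms I obtain
\[
    \|y_\gamma - y_{\gamma,h}\|_Y^2 + \gamma \|u_\gamma - u_{\gamma,h}\|_{L^2(\Omega)}^2 \leq (y_\gamma - y_{\gamma,h},\, (K - K_h)u_{\gamma,h})_Y + ((K^* - K_h^*)(z - y_{\gamma,h}),\, u_\gamma - u_{\gamma,h})_{L^2(\Omega)}.
\]

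The two terms on the right are precisely the quantities controlled by the approximation hypothesis \eqref{eq:discrete_operator_ass}. I would bound them by Cauchy--Schwarz, using that each summand in \eqref{eq:discrete_operator_ass} is individually at most $\delta(h)$, and then apply Young's inequality in two different weightings: a plain split $ab \leq \tfrac12 a^2 + \tfrac12 b^2$ on the state term to absorb $\tfrac12\|y_\gamma - y_{\gamma,h}\|_Y^2$, and a $\gamma$-weighted split $ab \leq \tfrac{\gamma}{2}a^2 + \tfrac{1}{2\gamma}b^2$ on the control term to absorb $\tfrac{\gamma}{2}\|u_\gamma - u_{\gamma,h}\|_{L^2(\Omega)}^2$. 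Moving the absorbed halves to the left and multiplying by two leaves the factor $(1 + \gamma^{-1})$ in front of $\delta(h)^2$, which is the claim. The main obstacle — indeed essentially the only place demanding care — is the bookkeeping in the adjoint expansion: not conflating $K u_{\gamma,h}$ with the discrete state $y_{\gamma,h} = K_h u_{\gamma,h}$ is exactly what makes $\|y_\gamma - y_{\gamma,h}\|_Y^2$ appear with the correct sign and routes both error contributions into \eqref{eq:discrete_operator_ass}; the rest is the standard monotonicity-plus-Young computation.
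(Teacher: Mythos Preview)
Your proposal is correct and follows essentially the same route as the paper's proof: add the two variational inequalities, use convexity of $G$ to drop the directional-derivative terms, split the adjoint difference to expose $-\|y_\gamma-y_{\gamma,h}\|_Y^2$ together with the two discretization remainders from \eqref{eq:discrete_operator_ass}, and then apply Young's inequality with weights $1$ and $\gamma$ respectively. The only cosmetic difference is the order in which you expand $p_\gamma-p_{\gamma,h}$ versus the inner product, but the computation and the resulting bound are identical.
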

\begin{proof}
    With $u_{\gamma,h}$ and $u_\gamma$ solutions to \eqref{eq:var_discrete} and \eqref{eq:main_tik}, respectively, we have from \cref{thm:opt_variational} that
    \begin{align*}
        \left( -p_{\gamma,h} + \gamma u_{\gamma,h}, u_\gamma - u_{\gamma,h} \right)_{L^2(\Omega)} + \alpha G'(u_{\gamma,h}; u_\gamma - u_{\gamma,h}) &\geq 0 ,\\
        \left( -p_\gamma + \gamma u_\gamma, u_{\gamma,h} - u_\gamma \right)_{L^2(\Omega)} + \alpha G'(u_\gamma; u_{\gamma,h} - u_\gamma) &\geq 0.
    \end{align*}
    Adding these two inequalities, substituting $p_{\gamma,h} = -K_h^*(K_hu_{\gamma,h} - z),p_\gamma=-K^*(Ku_\gamma - z)$, and using the convexity of $G$ then yields
    \begin{multline*}
        \left( K_h^*(K_h u_{\gamma,h} - z) + \gamma u_{\gamma,h}, u_\gamma - u_{\gamma,h}\right) + \left( K^*(K u_\gamma - z) + \gamma u_\gamma, u_{\gamma,h} - u_\gamma\right)\\
        \ge -\alpha \left(G'(u_{\gamma,h}; u_\gamma - u_{\gamma,h}) + G'(u_\gamma; u_{\gamma,h} - u_\gamma)\right)\ge 0.
    \end{multline*}
    We thus obtain that
    \begin{equation*}
        \begin{aligned}
            \gamma\|u_{\gamma,h}-u_\gamma\|_{L^2(\Omega)}^2 & \le\left( K_h^*(y_{\gamma,h} - z) - K^*(y_\gamma - z),u_\gamma - u_{\gamma,h}\right)\\
                                                            & \le \left((K_h^* - K^*)(y_{\gamma,h} - z),u_\gamma - u_{\gamma,h}\right) + \left(K^*(y_{\gamma,h} - y_\gamma),u_\gamma - u_{\gamma,h}\right).
        \end{aligned}		
    \end{equation*}
    The rest of the proof follows similarly to the proof of \cite[Proposition 1.6]{Wachsmuth2013}. The first term on the right-hand side is estimated by the Cauchy--Schwarz inequality and the inequality \eqref{eq:discrete_operator_ass} as
    \begin{equation*}
        \left((K_h^* - K^*)(y_{\gamma,h} - z),u_\gamma - u_{\gamma,h}\right) \le \frac{\gamma}{2}\|u_{\gamma,h}-u_\gamma\|_{L^2(\Omega)}^2 + \frac{1}{2\gamma}\delta(h)^2.
    \end{equation*}
    Rewriting the second term and using again the Cauchy--Schwarz inequality combined with the inequality \eqref{eq:discrete_operator_ass}, we obtain
    \begin{equation*}
        \begin{aligned}
            \left(K^*(y_{\gamma,h} - y_\gamma),u_\gamma - u_{\gamma,h}\right) &= -\|y_\gamma - y_{\gamma,h}\|_Y^2 + (y_\gamma - y_{\gamma,h}, (K_h - K)u_{\gamma,h})\\
                                                                              & \le -\frac{1}{2}\|y_\gamma - y_{\gamma,h}\|_Y^2 + \frac{1}{2}\delta(h)^2.
        \end{aligned}
    \end{equation*}
    Adding these two estimates, we finally arrive at
    \begin{equation*}
        \frac{1}{2}\|y_\gamma - y_{\gamma,h}\|_Y^2 + \frac{\gamma}{2} \|u_\gamma - u_{\gamma,h}\|_{L^2(\Omega)}^2 \leq \left(\frac{1}{2} + \frac{1}{2\gamma}\right)\delta(h)^2.
        \qedhere
    \end{equation*}
\end{proof}

Combining the approximation error estimate from \cref{thm:conv_rates} and the discretization error estimate from \cref{thm:disc_error}, we immediately obtain the following result.
\begin{theorem}\label{thm:discretization_error_estimate}
    If $\bar u$ satisfies \cref{reg}, then
    \begin{equation*}
        \frac{1}{\gamma}\|y_{\gamma, h} - \bar y\|_Y^2 + \|u_{\gamma, h} - \bar u\|_{L^2(\Omega)}^2 \leq c\left( \gamma^{-1}(1+\gamma^{-1}) \delta (h)^2 + \gamma^\kappa \right)
    \end{equation*}
    holds for all $\gamma>0$ and $h \geq 0$.
\end{theorem}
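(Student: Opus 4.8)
The plan is to prove the combined estimate by the triangle inequality, splitting each error into an approximation part (controlled by \cref{thm:conv_rates}) and a discretization part (controlled by \cref{thm:disc_error}). Since both ingredients are already in hand, no new analytic idea is required; the only care needed is in tracking the powers of $\gamma^{-1}$ that arise when the two estimates are combined.

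First I would treat the scaled state error. Writing
\[
    \|y_{\gamma,h} - \bar y\|_Y^2 \leq 2\|y_{\gamma,h} - y_\gamma\|_Y^2 + 2\|y_\gamma - \bar y\|_Y^2,
\]
I would multiply through by $\gamma^{-1}$ and bound the first summand using \cref{thm:disc_error}, which gives $\gamma^{-1}\|y_{\gamma,h} - y_\gamma\|_Y^2 \leq \gamma^{-1}(1 + \gamma^{-1})\delta(h)^2$, and the second using \cref{thm:conv_rates}, which directly yields $\gamma^{-1}\|y_\gamma - \bar y\|_Y^2 \leq c\gamma^\kappa$.

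For the control error I would proceed analogously, starting from
\[
    \|u_{\gamma,h} - \bar u\|_{L^2(\Omega)}^2 \leq 2\|u_{\gamma,h} - u_\gamma\|_{L^2(\Omega)}^2 + 2\|u_\gamma - \bar u\|_{L^2(\Omega)}^2.
\]
Here \cref{thm:disc_error}, after dividing by $\gamma$ to isolate the control term, gives $\|u_{\gamma,h} - u_\gamma\|_{L^2(\Omega)}^2 \leq \gamma^{-1}(1 + \gamma^{-1})\delta(h)^2$, while \cref{thm:conv_rates} bounds the second summand by $c\gamma^\kappa$. Adding the two resulting estimates and absorbing all numerical prefactors into a single constant $c > 0$ then gives the claim.

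The proof has no genuine obstacle; it is essentially a bookkeeping exercise. The one point to watch is that the discretization estimate of \cref{thm:disc_error} carries the factor $(1 + \gamma^{-1})$, and that isolating either the state or the control contribution there requires a further division by $\gamma$. As a consequence both discretization contributions enter with the prefactor $\gamma^{-1}(1 + \gamma^{-1})$, which is exactly the form stated in the theorem, while the two approximation contributions combine to the $\gamma^\kappa$ term.
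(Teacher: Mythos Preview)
Your proof is correct and matches the paper's approach exactly: the paper simply states that the result follows immediately by combining \cref{thm:conv_rates} and \cref{thm:disc_error}, and your triangle-inequality splitting is precisely how that combination is carried out. There is nothing to add.
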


\section{Active set method for the regularized problem}\label{sec:active_set}

Let us now consider the special case where $y = Ku$ is given as the unique solution of the partial differential equation
\begin{equation}\label{eq:pde}
    \left\{
        \begin{aligned}
            Ay = u \quad & \text{in} \quad \Omega,\\
            y = 0 \quad &\text{on} \quad \partial \Omega.
        \end{aligned}
    \right.
\end{equation}
with $A$ being a second-order linear differential operator, e.g., $A = - \Delta$. 
In this case, the optimality conditions from \cref{thm:optimality_set_reg} can be solved using a superlinearly convergent semi-smooth Newton method in function space; see \cite{CK:2013,CK:2015,ClasonDo:2017}.

We recall that \eqref{eq:opt_reg} can be written as $u_\gamma = H_\gamma(p_\gamma)$ for $H_\gamma:L^r(\Omega)\to L^2(\Omega)$ with $r\geq 2$, 
\begin{equation*}
    [H_\gamma(p)](x) = \begin{cases}
        u_i & \text{if } p(x) \in Q_i^\gamma,\qquad 1 \leq i \leq d,\\
        \frac{1}{\gamma}\left( p(x) - \frac{\alpha}{2}(u_i + u_{i+1}) \right) & \text{if } p(x) \in Q_{i,i+1}^\gamma,\quad 1 \leq i < d,
    \end{cases}
\end{equation*}
where $p_\gamma\in H^1_0(\Omega)$ is the solution to the adjoint equation
\begin{equation}\label{eq:adjoint}
    \left\{
        \begin{aligned}
            A^*p &= z-y_\gamma \quad  \text{in} \quad \Omega,\\
            p &= 0 \quad\quad\quad \text{on} \quad \partial \Omega,
        \end{aligned}
    \right.
\end{equation}
and $y_\gamma$ is the solution to \eqref{eq:pde} with $u=u_\gamma$. From the regularity theory for \eqref{eq:adjoint}, the Sobolev embedding $H^1_0(\Omega)\hookrightarrow L^r(\Omega)$ for some $r>2$, and the general theory of semi-smooth Newton methods in function space \cite{Ulbrich:2011}, we deduce that the superposition operator $H_\gamma$ is Newton differentiable from $L^r(\Omega)$ to $L^2(\Omega)$ with
\begin{equation*}
    [D_N H_\gamma (p) h](x) = \begin{cases}
        \frac{1}{\gamma}h(x) & \text{if } p(x) \in Q_{i,i+1}^\gamma,\\
        0 & \text{else}.
    \end{cases}
\end{equation*}
A Newton step for the solution of \eqref{eq:main_tik} can therefore be formulated as
\begin{equation}\label{eq:newton_step}
    \begin{pmatrix}
        -\text{Id} & A & 0 \\
        0 & \text{Id} & A^\ast \\
        0 & A & -D_N H_\gamma(p^k) 
        \end{pmatrix} \begin{pmatrix}
        u^{k+1} - u^k\\
        y^{k+1} - y^k \\
        p^{k+1} - p^k
        \end{pmatrix} = - \begin{pmatrix}
        Ay^k - u^k\\
        A^\ast p^k + y^k - z \\
        Ay^k - H_\gamma (p^k)
    \end{pmatrix}
\end{equation}

In \cite{ClasonDo:2017}, this was reduced to a symmetric system in $(y,p)$. Here, we instead consider an equivalent primal active set formulation that has proven to be more robust for small values of $\gamma$ and $h$.
In a slight abuse of notation, we introduce
\begin{equation*}
    Q_i^k:=\left\{x\in\Omega:p^k(x)\in Q_i^\gamma\right\},\qquad 1\leq i\leq d,
\end{equation*}
and similarly for $Q_{i,i+1}^k$. 
The following algorithm is an extension of the one proposed in \cite{Stadler2009} for $G(u)=\|u\|_{L^1(\Omega)}$.
\begin{myalg}\label{myalg}
    Choose initial data $u^0, p^0$ and parameters $\alpha, \gamma$, set $k=0$ and compute the sets $Q_i^0$ for $1 \leq i \leq d$ and $Q_{i,i+1}^0$ for $1 \leq i < d$.
    \begin{enumerate}
        \item Solve for $(u^{k+1},y^{k+1}, p^{k+1}, \lambda^{k+1})$ satisfying
            \begin{subequations}\label{eq:active_set}
                \begin{equation}\label{eq:tech_1}
                    \left\{
                        \begin{aligned}
                            Ay^{k+1} -u^{k+1} &=0 ,\\
                            A^*p^{k+1} + y^{k+1}-z &= 0,\\
                            - p^{k+1}+\gamma u^{k+1} + \alpha \lambda^{k+1} &= 0,
                        \end{aligned}
                    \right.
                \end{equation}
                \begin{equation}\label{eq:tech_2}
                    \left(1- \sum\limits_{i=1}^d \1_{Q_i^k} \right) \lambda^{k+1} + \left(1-\sum\limits_{i=1}^{d-1} \1_{Q_{i,i+1}^k} \right) u^{k+1}
                    =\sum\limits_{i=1}^d \1_{Q_i^k} u_i + \frac{1}{2}\sum\limits_{i=1}^{d-1} \1_{Q_{i,i+1}^k} (u_i + u_{i+1}) ,
                \end{equation}
            \end{subequations}
        \item Compute the sets $Q_i^{k+1}$ for $1 \leq i \leq d$ and $Q_{i,i+1}^{k+1}$ for $1 \leq i < d$.
        \item If $Q_i^k = Q_i^{k+1}$ for $1 \leq i \leq d$ and $Q_{i,i+1}^k = Q_{i,i+1}^{k+1}$ for $1 \leq i < d$, then go to step 4. Otherwise set $k = k+1$ and go to step 2.
        \item STOP: $u^{k+1}$ is a solution of \eqref{eq:main_tik}.
    \end{enumerate}
\end{myalg}

The stopping criterion yields solutions of \eqref{eq:main_tik}.
\begin{lemma}
    If 
    \begin{align*}
        Q_i^k &= Q_i^{k+1} \quad 1 \leq i \leq d,\\
        Q_{i,i+1}^k &= Q_{i,i+1}^{k+1} \quad 1 \leq i < d,
    \end{align*}
    then the solution $(u^{k+1}, p^{k+1})$ computed from \eqref{eq:active_set} satisfy \eqref{eq:opt_reg}. In particular, $u^{k+1}$ is a solution to \eqref{eq:main_tik}.
\end{lemma}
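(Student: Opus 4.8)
The plan is to exploit the stabilization hypothesis to replace the indicator functions $\1_{Q_i^k}$ and $\1_{Q_{i,i+1}^k}$ appearing in the switching equation \eqref{eq:tech_2}---which are built from the previous iterate $p^k$---by $\1_{\{p^{k+1}\in Q_i^\gamma\}}$ and $\1_{\{p^{k+1}\in Q_{i,i+1}^\gamma\}}$, and then to read off \eqref{eq:opt_reg} pointwise on each member of the resulting partition of $\Omega$. First I would observe that the sets $Q_i^\gamma$ for $1\le i\le d$ together with the $Q_{i,i+1}^\gamma$ for $1\le i<d$ form a partition of $\R$: the open value-sets $Q_i^\gamma$ are separated by the closed singular sets $Q_{i,i+1}^\gamma$, whose endpoints coincide with those of the adjacent $Q_i^\gamma$ by construction. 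Consequently, after stabilization the sets $Q_i^{k+1}=\{p^{k+1}\in Q_i^\gamma\}$ and $Q_{i,i+1}^{k+1}=\{p^{k+1}\in Q_{i,i+1}^\gamma\}$ partition $\Omega$, and by hypothesis they agree with $Q_i^{k}$ and $Q_{i,i+1}^{k}$.

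Next I would evaluate \eqref{eq:tech_2} on each piece of this partition. On a value-set $Q_i^{k+1}$ exactly one indicator $\1_{Q_j^k}$ equals one (namely $j=i$) and no $\1_{Q_{j,j+1}^k}$ is active, so the coefficient of $\lambda^{k+1}$ vanishes while the coefficient of $u^{k+1}$ equals one; since the right-hand side reduces to $u_i$, this gives $u^{k+1}=u_i$ there, which is the first case of \eqref{eq:opt_reg}. On a singular set $Q_{i,i+1}^{k+1}$ the roles reverse: the coefficient of $u^{k+1}$ vanishes and \eqref{eq:tech_2} yields $\lambda^{k+1}=\tfrac{1}{2}(u_i+u_{i+1})$. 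Substituting this into the third line of \eqref{eq:tech_1}, namely $p^{k+1}=\gamma u^{k+1}+\alpha\lambda^{k+1}$, and solving for $u^{k+1}$ produces $u^{k+1}=\tfrac{1}{\gamma}\bigl(p^{k+1}-\tfrac{\alpha}{2}(u_i+u_{i+1})\bigr)$, which is the second case of \eqref{eq:opt_reg}.

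It then remains to check the hypotheses of \cref{thm:optimality_set_reg}. The first two lines of \eqref{eq:tech_1} read $Ay^{k+1}=u^{k+1}$ and $A^\ast p^{k+1}=z-y^{k+1}$, i.e.\ $y^{k+1}=Ku^{k+1}$ and $p^{k+1}=K^\ast(z-Ku^{k+1})$, so $p^{k+1}$ is indeed the adjoint state associated with $u^{k+1}$. Moreover $u^{k+1}\in\Uad$: on the value-sets $u^{k+1}=u_i\in[u_1,u_d]$, while on $Q_{i,i+1}^{k+1}$ the defining bounds $\tfrac{\alpha}{2}((1+2\tfrac{\gamma}{\alpha})u_i+u_{i+1})\le p^{k+1}\le \tfrac{\alpha}{2}(u_i+(1+2\tfrac{\gamma}{\alpha})u_{i+1})$ translate, after subtracting $\tfrac{\alpha}{2}(u_i+u_{i+1})$ and dividing by $\gamma>0$, into $u_i\le u^{k+1}\le u_{i+1}$. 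Since $(u^{k+1},p^{k+1})$ satisfies \eqref{eq:opt_reg} with $p^{k+1}$ the associated adjoint state and $u^{k+1}\in\Uad$, \cref{thm:optimality_set_reg} yields that $u^{k+1}$ solves \eqref{eq:main_tik}.

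The only genuinely delicate point is the identification of the ``old'' indicators $\1_{Q_i^k}$ with the ``new'' ones $\1_{\{p^{k+1}\in Q_i^\gamma\}}$: this is precisely where the stabilization hypothesis $Q_i^k=Q_i^{k+1}$, $Q_{i,i+1}^k=Q_{i,i+1}^{k+1}$ enters. Without it, \eqref{eq:tech_2} would couple $u^{k+1}$ and $\lambda^{k+1}$ according to the partition induced by $p^k$ rather than by $p^{k+1}$, and the pointwise characterization \eqref{eq:opt_reg} would fail. Everything else is a pointwise case distinction over the finitely many sets of the partition together with the elementary algebra of solving \eqref{eq:tech_2} and the third line of \eqref{eq:tech_1} on each piece.
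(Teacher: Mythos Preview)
Your proof is correct and follows essentially the same route as the paper's. The paper's argument is extremely terse: it notes that for fixed active sets the linear system \eqref{eq:active_set} has a unique solution, concludes that the iteration has reached a fixed point, and then simply says that inserting this into \eqref{eq:tech_2} and comparing with \eqref{eq:opt_reg} gives the claim. You carry out precisely this comparison in full detail---the pointwise case distinction on the partition of $\Omega$ induced by $p^{k+1}$, the verification that $p^{k+1}$ is the associated adjoint state, and the check that $u^{k+1}\in\Uad$---whereas the paper leaves all of this implicit. The one cosmetic difference is that the paper phrases the stabilization step via uniqueness of the linear system (so that consecutive iterates coincide), while you phrase it directly as substituting $\1_{Q_i^{k+1}}$ for $\1_{Q_i^k}$; these are equivalent, and your formulation is arguably cleaner.
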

\begin{proof}
    Since for fixed $Q_i^k$ and $Q_{i,i+1}^k$ the solution of \eqref{eq:active_set} is unique, we have $(u^k, y^k, p^k) = (u^{k+1}, y^{k+1}, p^{k+1})$. Inserting this into \eqref{eq:tech_2} and comparing with \eqref{eq:opt_reg} yields the claim.
\end{proof}

We now show that \cref{myalg} coincides with a semi-smooth Newton method, which implies locally superlinear convergence.
\begin{theorem}
    The active set step \eqref{eq:active_set} is equivalent to the semi-smooth Newton step \eqref{eq:newton_step}.
\end{theorem}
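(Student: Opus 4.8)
The plan is to expand the Newton system \eqref{eq:newton_step} row by row and match the resulting equations with the active set system \eqref{eq:active_set}, treating $\lambda^{k+1}$ as an auxiliary multiplier with no counterpart in the Newton formulation. The first row of \eqref{eq:newton_step} reads $-(u^{k+1}-u^k)+A(y^{k+1}-y^k)=-(Ay^k-u^k)$; after cancelling the terms evaluated at iterate $k$ this is exactly the state equation $Ay^{k+1}-u^{k+1}=0$ of \eqref{eq:tech_1}, and in particular $u^{k+1}=Ay^{k+1}$. The second row likewise collapses to the adjoint equation $A^\ast p^{k+1}+y^{k+1}-z=0$. These two equations coincide verbatim with the first two lines of \eqref{eq:tech_1}, so it remains only to identify the third row of \eqref{eq:newton_step} with the pair consisting of the third line of \eqref{eq:tech_1} and the algebraic relation \eqref{eq:tech_2}.

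For the third row I would substitute $u^{k+1}=Ay^{k+1}$ (from the first row), reducing $A(y^{k+1}-y^k)-D_NH_\gamma(p^k)(p^{k+1}-p^k)=-(Ay^k-H_\gamma(p^k))$ to the linearized control equation
\begin{equation*}
    u^{k+1} = H_\gamma(p^k) + D_NH_\gamma(p^k)(p^{k+1}-p^k).
\end{equation*}
This is precisely the semismooth Newton update for the fixed-point relation $u_\gamma=H_\gamma(p_\gamma)$, so the task becomes showing that the active set equations reproduce this identity pointwise.

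The key structural fact I would establish first is that the sets $Q_i^\gamma$ ($1\le i\le d$) and $Q_{i,i+1}^\gamma$ ($1\le i<d$) form a disjoint partition of $\R$: by the definitions in \cref{thm:optimality_set_reg} the open intervals $Q_i^\gamma$ are separated by the closed intervals $Q_{i,i+1}^\gamma$, whose shared endpoints belong to the latter. Consequently, for almost every $x\in\Omega$ exactly one indicator is active, so $\sum_{i=1}^d\1_{Q_i^k}(x)+\sum_{i=1}^{d-1}\1_{Q_{i,i+1}^k}(x)=1$, and the prefactors in \eqref{eq:tech_2} simplify to the complementary indicator sums. A pointwise case distinction then finishes the argument. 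On $\{x:p^k(x)\in Q_i^\gamma\}$ the relation \eqref{eq:tech_2} forces $u^{k+1}(x)=u_i=H_\gamma(p^k)(x)$, matching the linearized update since $D_NH_\gamma(p^k)=0$ there. On $\{x:p^k(x)\in Q_{i,i+1}^\gamma\}$ it instead fixes $\lambda^{k+1}(x)=\tfrac12(u_i+u_{i+1})$; inserting this into the third line of \eqref{eq:tech_1} yields $u^{k+1}(x)=\tfrac1\gamma(p^{k+1}(x)-\tfrac\alpha2(u_i+u_{i+1}))$, which coincides with $H_\gamma(p^k)(x)+\tfrac1\gamma(p^{k+1}(x)-p^k(x))$ because $D_NH_\gamma(p^k)=\tfrac1\gamma$ and $H_\gamma(p^k)(x)=\tfrac1\gamma(p^k(x)-\tfrac\alpha2(u_i+u_{i+1}))$ on this set. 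For the converse direction, given a Newton iterate I would define $\lambda^{k+1}$ through the third line of \eqref{eq:tech_1} and verify \eqref{eq:tech_2} by the same case distinction, establishing that both systems determine the same triple $(u^{k+1},y^{k+1},p^{k+1})$.

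The main obstacle I anticipate is the bookkeeping in \eqref{eq:tech_2}, which uses a single equation to switch which unknown is \emph{determined} and which is \emph{free} depending on the active set: on the $Q_i^k$ it pins down $u^{k+1}$ and lets $\lambda^{k+1}$ be recovered from the multiplier equation, while on the singular sets $Q_{i,i+1}^k$ it pins down $\lambda^{k+1}$ and lets $u^{k+1}$ vary. Getting the indicator algebra and these two complementary roles exactly right, and confirming that the auxiliary variable $\lambda^{k+1}$ (absent from the Newton system) can be consistently eliminated, is the crux; once the partition identity is in place, the remaining computation is the routine pointwise matching sketched above.
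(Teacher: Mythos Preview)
Your proposal is correct and follows essentially the same approach as the paper: both identify the first two rows of \eqref{eq:newton_step} with the state and adjoint equations in \eqref{eq:tech_1}, and then handle the third row by a pointwise case distinction over the sets $Q_i^k$ and $Q_{i,i+1}^k$, introducing $\lambda^{k+1}$ as an auxiliary multiplier determined by the third line of \eqref{eq:tech_1}. Your version is slightly more explicit about the partition identity and the two directions of the equivalence, but the argument is the same.
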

\begin{proof}
    Clearly, the first two equations of \eqref{eq:newton_step} are equivalent to the first two equation of \eqref{eq:tech_1}. It therefore remains to consider the last equation, which is given by     \begin{equation}\label{eq:third_row}
        A(y^{k+1} - y^k) - D_N H_\gamma (p^k) (p^{k+1} - p^k) = - Ay^k + H_\gamma (p^k).
    \end{equation}
    Let us define the function
    \begin{equation*}
        \lambda^{k+1}(x) := \begin{cases}
            -\frac{1}{\alpha} \left( - p^{k+1}(x) + \gamma u^{k+1} \right) & \text{if } x \in Q_i^k,\\
            \frac{1}{2}(u_i + u_{i+1}) & \text{if } x \in Q_{i,i+1}^k.
        \end{cases}
    \end{equation*}
    We now make a case distinction pointwise almost everywhere.
    \begin{enumerate}[label=(\roman*)]
        \item If $x\in Q_i^k$, \eqref{eq:third_row} reduces to $[Ay^{k+1}](x) = u_i$, and from the first line of \eqref{eq:newton_step} we obtain $u^{k+1}(x) = u_i$.

        \item If $x\in Q_{i,i+1}^k$, \eqref{eq:third_row} shows that 
            \begin{equation*}
                \gamma u^{k+1}(x) - p^{k+1}(x) + \frac{\alpha}{2}(u_i + u_{i+1}) = \gamma u^{k+1}(x) - p^{k+1}(x) + \alpha \lambda^{k+1}(x)=0.
            \end{equation*}
    \end{enumerate}
    Hence the third row of \eqref{eq:newton_step} is equivalent to \eqref{eq:tech_2}. In both cases, we obtain from the definition of $\lambda^{k+1}$ that
    \begin{equation*}
        -p^{k+1} + \gamma u^{k+1} + \alpha \lambda^{k+1} = 0,
    \end{equation*}
    which finally gives \eqref{eq:tech_1} and therefore the claimed equivalence. 
\end{proof}

\section{Numerical results}\label{sec:numerics}

In this section we present some numerical results and convergence rates. Let $\Omega \subset \R^d$ be a bounded Lipschitz domain and $K$ be the operator mapping $u$ to the weak solution $y$ of 
\begin{equation}\label{eq:PDE_linear}
    \left\{
        \begin{aligned}
            - \Delta y = u \quad & \text{in} \quad \Omega,\\
            y = 0 \quad &\text{on} \quad \partial \Omega.
        \end{aligned}
    \right.
\end{equation}
The operator $K_h$ is correspondingly defined via the Galerkin approximation of \eqref{eq:PDE_linear} using linear finite elements on a triangulation of $\Omega$, which is chosen in such a way that the approximation condition \eqref{eq:discrete_operator_ass} is satisfied; see \cite{Wachsmuth2013}.
For the multibang penalty, we take $(u_1,\dots,u_5) = (-2, -1, 0, 1, 2)$ and $\alpha = 2$.
We implemented \cref{myalg} in Python using DOLFIN \cite{LoggWells2010a,LoggWellsEtAl2012a}, which is part of the open-source computing platform FEniCS \cite{AlnaesBlechta2015a,LoggMardalEtAl2012a}. The linear system \eqref{eq:active_set} arising from the active set step is solved using the sparse direct solver \verb!spsolve! from SciPy. The code used to obtain the following results can be downloaded from \url{https://github.com/clason/multibangestimates}.

\paragraph[Example~1]{Example 1: $\kappa = 1$}\label{subsec:example1}

We first consider $\Omega=(0,1)$ and define
\begin{align*}
    \bar p(x) &:= 
    \begin{aligned}[t]
        &\left(\tfrac{27}{2} x\right)\1_{[0,\frac{2}{9})}(x)\\
        &+\left(-72 + \tfrac{3123 x}{2} - 13122 x^2 + 54675 x^3 - 111537 x^4 + \tfrac{177147}{2} x^5\right) \1_{[\frac{2}{9}, \frac{3}{9})}(x)\\
        &+\left(9 - 18 x\right)\1_{[\frac{3}{9}, \frac{6}{9})}(x)\\
        &+\left(-20079 + 136062 x - 367416 x^2 + 494262 x^3 - \tfrac{662661}{2} x^4 +\tfrac{177147}{2} x^5\right)\1_{[\frac{6}{9}, \frac{7}{9})}(x)\\
        &+\left(-\tfrac{27}{2} + \tfrac{27}{2} x\right)\1_{[\frac{7}{9},1]}(x),
    \end{aligned}\\
    \bar u(x) &:= 
    \1_{[\frac{2}{27}, \frac{2}{9})}(x) 
    + 2\1_{[\frac{2}{9}, \frac{3}{9})}(x) 
    + \1_{[\frac{3}{9}, \frac{4}{9})}(x)
    -\1_{[\frac{5}{9}, \frac{6}{9})}(x)
    -2\1_{[\frac{6}{9}, \frac{7}{9})}(x)
    -\1_{[\frac{7}{9}, \frac{25}{27})}(x),\\
    \bar y(x) &:= \sin(2 \pi x)\\
    e_\Omega &:= -\Delta \bar y - \bar u,\\
    z &:= - Ke_\Omega - \Delta \bar p + \bar y,
\end{align*}
see \cref{fig:adjoint1,fig:control}.
\begin{figure}[tbp]
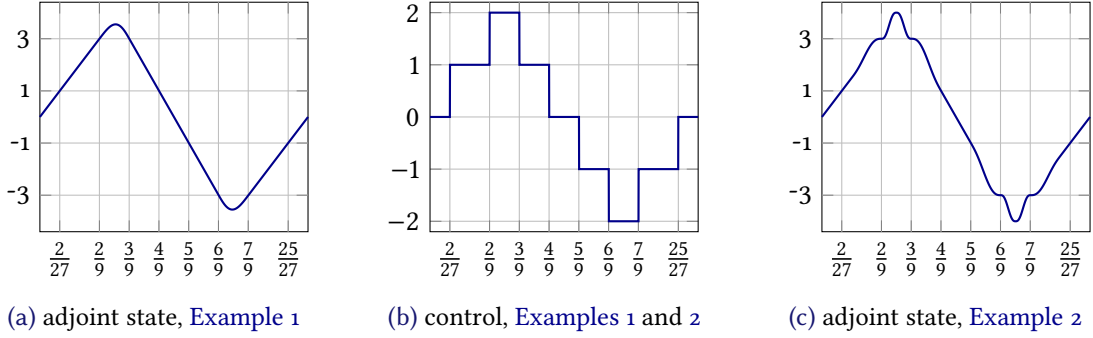

    \centering
    \begin{subfigure}[t]{0.3\textwidth}
        \centering
        \input{adjoint_example1.tikz}
        \caption{adjoint state, \nameref{subsec:example1}}
        \label{fig:adjoint1}
    \end{subfigure}
    \hfill
    \begin{subfigure}[t]{0.3\textwidth}
        \input{control_example1.tikz}
        \caption{control, \hyperref[subsec:example1]{Examples 1} and \hyperref[subsec:example2]{2}}
        \label{fig:control}
    \end{subfigure}
    \hfill
    \begin{subfigure}[t]{0.3\textwidth}
        \centering
        \input{adjoint_example2.tikz}
        \caption{adjoint state, \nameref{subsec:example2}}
        \label{fig:adjoint2}
    \end{subfigure}
    \caption{constructed optimal adjoint states $\bar p$ and optimal control $\bar u$}
    \label{fig:plot_1d}
\end{figure}
Note that $\bar p, \bar y \in C^2(\overline \Omega)$, and that $\bar u$ and $\bar p$ satisfy the optimality conditions in \cref{thm:optimality_set}. Hence, $(\bar u, \bar p)$ are a solution to \eqref{eq:main}. From \cref{thm:suffreg} we further deduce that \cref{reg} is satisfied with $\kappa = 1$.

We now compute the solution of \eqref{eq:var_discrete} for different values of $h$, where $\Omega$ is divided into equidistant elements with mesh size $h$. From \cref{thm:conv_rates} we expect that the numerical convergence rate
\begin{equation*}
    \kappa_{\gamma,h} := \frac{1}{\log(2)} \log\left( \frac{\|u_{\gamma/2,h} - \bar u\|_{L^2(\Omega)}^2}{\|u_{\gamma,h} - \bar u\|_{L^2(\Omega)}^2} \right)
\end{equation*}
satisfies $\kappa_{\gamma,h}\geq\kappa =1$. We compute $\kappa_{\gamma,h}$ for different but fixed mesh sizes $h$. Due to the discretization error, we expect a certain saturation effect for small $\gamma$; see \cref{thm:discretization_error_estimate}. Note that for $d=2$, it is known that \cref{reg} is not only sufficient for convergence rates similar to \cref{thm:conv_rates} but also necessary for high convergence rates; see \cite{WachsmuthWachsmuth2013}. Hence, we expect that $\kappa_{\gamma,h} \approx 1$, which can be observed from \cref{tab:order:ex1,fig:conv_error:ex1}. 
In addition, the discretization error dominates for small $\gamma$ as expected.
\begin{table}[t]
    \captionabove{computed numerical order of convergence for different $h$\label{tab:order}}
    \centering
    \begin{subtable}[t]{0.495\textwidth}
        \caption{\nameref{subsec:example1}}\label{tab:order:ex1}
        \centering
        \begin{tabular}[t]{
            c
            S[table-format=1.4]
            S[table-format=1.4]
            S[table-format=1.4]
        }
            \toprule
            & \multicolumn{3}{c}{$\kappa_{\gamma,h}$} \\
            \midrule
            $\gamma \setminus h$ & {$10^{-4}$} & {$10^{-5}$} & {$10^{-6}$}\\
            \midrule
            $2^{-4}$ & 1.0143 & 1.0142 & 1.0141 \\
            $2^{-6}$ & 1.0028 & 1.0008 & 1.0007 \\
            $2^{-8}$ & 1.0211 & 1.0004 & 0.9998 \\
            $2^{-10}$ & 0.9295 & 1.0038 & 0.9989 \\
            $2^{-12}$ & 0.6828 & 1.0049 & 0.9954 \\
            $2^{-14}$ & 0.0 & 0.9592 & 0.9917 \\
            $2^{-16}$ & 0.0 & -0.0096 & 0.9701 \\
            $2^{-18}$ & 0.0 & 0.0 & 0.1308 \\
            \bottomrule
        \end{tabular}
    \end{subtable}
    \hfill
    \begin{subtable}[t]{0.495\textwidth}
        \caption{\nameref{subsec:example2}}\label{tab:order:ex2}
        \centering
        \begin{tabular}[t]{
            c
            S[table-format=1.4]
            S[table-format=1.4]
            S[table-format=1.4]
        }
            \toprule
            & \multicolumn{3}{c}{$\kappa_{\gamma,h}$} \\
            \midrule
            $\gamma \setminus h$ & {$10^{-4}$} & {$10^{-5}$} & {$10^{-6}$}\\
            \midrule
            $2^{-4}$  & 0.4679   & 0.4679 & 0.4679  \\
            $2^{-6}$  & 0.3993   & 0.3992 & 0.3992  \\
            $2^{-8}$  & 0.3668   & 0.3665 & 0.3664  \\
            $2^{-10}$ & 0.3509   & 0.3518 & 0.3513  \\
            $2^{-12}$ & 0.3379   & 0.3470 & 0.3453  \\
            $2^{-14}$ & 0.3293   & 0.3496 & 0.3424  \\
            $2^{-16}$ & 0.2986   & 0.3649 & 0.3413  \\
            $2^{-18}$ & 0.1774   & 0.4122 & 0.3274  \\
            \bottomrule
        \end{tabular}
    \end{subtable}
\end{table}

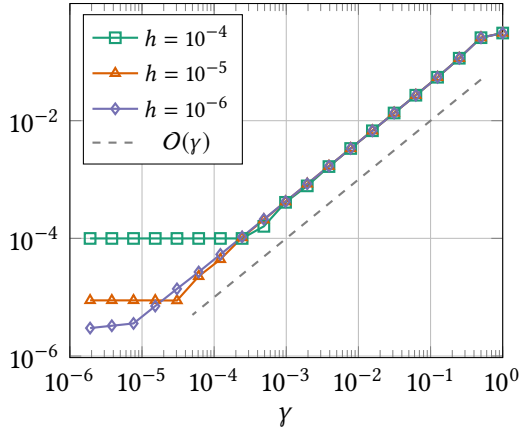
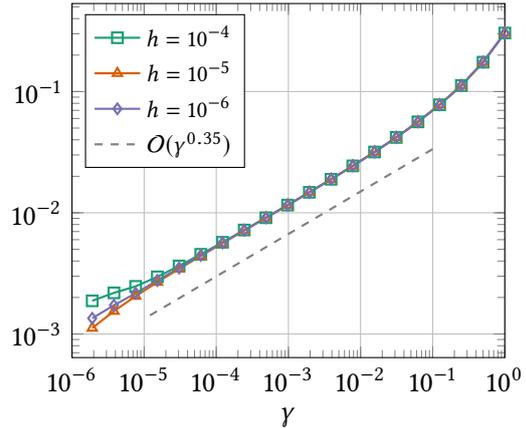
\begin{figure}[tbp]
    \centering
    \begin{subfigure}[t]{0.495\textwidth}
        \centering
        \begin{tikzpicture}
\begin{axis}[
width=\textwidth,
xlabel={$\gamma$},
ylabel style={align=center}, 
ymode=log,
xminorticks=true,
xmode=log,
xmin=1e-6,
xmax=1,
grid=major,
legend entries={$h=10^{-4}$,$h=10^{-5}$,$h=10^{-6}$,$\mathcal{O}(\gamma)$},
legend pos=north west,
]

\addplot +[mark=square,thick] 
plot coordinates {
( 1.0 , 0.308593089318 )
( 0.5 , 0.257322848636 )
( 0.25 , 0.115091603818 )
( 0.125 , 0.0546875179195 )
( 0.0625 , 0.0270736840352 )
( 0.03125 , 0.0135032125372 )
( 0.015625 , 0.00673853393574 )
( 0.0078125 , 0.00336858061005 )
( 0.00390625 , 0.00165986852541 )
( 0.001953125 , 0.000779915499445 )
( 0.0009765625 , 0.000409478787843 )
( 0.00048828125 , 0.000159980428218 )
( 0.000244140625 , 9.96632e-05 )
( 0.0001220703125 , 9.96632e-05 )
( 6.103515625e-05 , 9.96632e-05 )
( 3.0517578125e-05 , 9.96632e-05 )
( 1.52587890625e-05 , 9.96632e-05 )
( 7.62939453125e-06 , 9.96632e-05 )
( 3.81469726562e-06 , 9.96632e-05 )
( 1.90734863281e-06 , 9.96632e-05 )
};

\addplot+[mark=triangle,thick] 
plot coordinates {
( 1.0 , 0.30859321554 )
( 0.5 , 0.257323095428 )
( 0.25 , 0.115092126095 )
( 0.125 , 0.0546888348505 )
( 0.0625 , 0.0270771624453 )
( 0.03125 , 0.01350728349 )
( 0.015625 , 0.00674970061895 )
( 0.0078125 , 0.00337425488542 )
( 0.00390625 , 0.00168660664299 )
( 0.001953125 , 0.000842370250926 )
( 0.0009765625 , 0.000420063824396 )
( 0.00048828125 , 0.00020714280716 )
( 0.000244140625 , 0.000103218968084 )
( 0.0001220703125 , 4.39593043564e-05 )
( 6.103515625e-05 , 2.2610144561e-05 )
( 3.0517578125e-05 , 8.77297820536e-06 )
( 1.52587890625e-05 , 8.83199999999e-06 )
( 7.62939453125e-06 , 8.83199999999e-06 )
( 3.81469726562e-06 , 8.83199999999e-06 )
( 1.90734863281e-06 , 8.83199999999e-06 )
};
		
\addplot+[mark=diamond,thick] 
plot coordinates {
( 1.0 , 0.308593489781 )
( 0.5 , 0.257324250097 )
( 0.25 , 0.115093644562 )
( 0.125 , 0.0546897724977 )
( 0.0625 , 0.0270780350734 )
( 0.03125 , 0.0135081528524 )
( 0.015625 , 0.00675059497386 )
( 0.0078125 , 0.00337517128135 )
( 0.00390625 , 0.0016878760218 )
( 0.001953125 , 0.000844262108757 )
( 0.0009765625 , 0.000422460860162 )
( 0.00048828125 , 0.000211558601444 )
( 0.000244140625 , 0.000106115637211 )
( 0.0001220703125 , 5.33426225853e-05 )
( 6.103515625e-05 , 2.68256542029e-05 )
( 3.0517578125e-05 , 1.38530957485e-05 )
( 1.52587890625e-05 , 7.07175036881e-06 )
( 7.62939453125e-06 , 3.5817627808e-06 )
( 3.81469726562e-06 , 3.27123739998e-06 )
( 1.90734863281e-06 , 2.9868559094e-06 )
};

\addplot [domain=0.5:0.00005,gray,dashed,thick]{0.1*x};
	
\end{axis}
\end{tikzpicture}
        \caption{\nameref{subsec:example1}}\label{fig:conv_error:ex1}
    \end{subfigure}
    \hfill
    \begin{subfigure}[t]{0.495\textwidth}
        \centering
        \begin{tikzpicture}
\begin{axis}[
width=\textwidth,
xlabel={$\gamma$},
ylabel style={align=center}, 
ymode=log,
xminorticks=true,
xmode=log,
xmin=1e-6,
xmax=1,
grid=major,
legend entries={$h=10^{-4}$,$h=10^{-5}$,$h=10^{-6}$,$\mathcal{O}(\gamma^{0.35})$},
legend pos=north west,
]

\addplot+[mark=square,thick] 
plot coordinates {
( 1.0, 0.30414253724088613)
( 0.5, 0.1747615665779327)
( 0.25, 0.11217431279071897)
( 0.125, 0.07782806681481058)
( 0.0625, 0.056271974117169984)
( 0.03125, 0.041844213974210336)
( 0.015625, 0.03172723302458901)
( 0.0078125, 0.024381326111280317)
( 0.00390625, 0.0189080105467066)
( 0.001953125, 0.014756581328903739)
( 0.0009765625, 0.011570648407396668)
( 0.00048828125, 0.009106170848548811)
( 0.000244140625, 0.007204656286331864)
( 0.0001220703125, 0.005709457534309132)
( 6.103515625e-05, 0.004544259009545677)
( 3.0517578125e-05, 0.0036436428447372527)
( 1.52587890625e-05, 0.002962329182316212)
( 7.62939453125e-06, 0.002470504539726926)
( 3.814697265625e-06, 0.0021846592648420127)
( 1.9073486328125e-06, 0.0018779160080532992)
};

\addplot+[mark=triangle,thick] 
plot coordinates {
( 1.0, 0.30414192497980963)
( 0.5, 0.1747604459861054)
( 0.25, 0.11217410119884072)
( 0.125, 0.07782844399477816)
( 0.0625, 0.056272684004515905)
( 0.03125, 0.04184570641684169)
( 0.015625, 0.03173015474846991)
( 0.0078125, 0.02438595850674294)
( 0.00390625, 0.018914847904253195)
( 0.001953125, 0.014762252220840362)
( 0.0009765625, 0.011567686963589252)
( 0.00048828125, 0.009085819971257274)
( 0.000244140625, 0.007143526947570799)
( 0.0001220703125, 0.005614918138080865)
( 6.103515625e-05, 0.004406513906387492)
( 3.0517578125e-05, 0.003446360535002319)
( 1.52587890625e-05, 0.0026762121079309874)
( 7.62939453125e-06, 0.002053281686367624)
( 3.814697265625e-06, 0.0015430373578175578)
( 1.9073486328125e-06, 0.0011184700243211958)
};
		
\addplot+[mark=diamond,thick] 
plot coordinates {
( 1.0, 0.30414223256038636)
( 0.5, 0.17476144256507037)
( 0.25, 0.11217473568014599)
( 0.125, 0.07782915320602617)
( 0.0625, 0.05627355810440777)
( 0.03125, 0.041846866555097575)
( 0.015625, 0.03173176259360553)
( 0.0078125, 0.02438829252185674)
( 0.00390625, 0.01891836393318324)
( 0.001953125, 0.014767598380945235)
( 0.0009765625, 0.011575943636805629)
( 0.00048828125, 0.00909903261091397)
( 0.000244140625, 0.007162266449194472)
( 0.0001220703125, 0.005642507133642875)
( 6.103515625e-05, 0.00445029971107478)
( 3.0517578125e-05, 0.003503525389302103)
( 1.52587890625e-05, 0.002765414894951008)
( 7.62939453125e-06, 0.002166319956771129)
( 3.814697265625e-06, 0.001726532946458951)
( 1.9073486328125e-06, 0.0013468743106869819)
};
	
\addplot [domain=0.1:0.00001,gray,dashed,thick]{0.075*x^(0.35)};

\end{axis}
\end{tikzpicture}
        \caption{\nameref{subsec:example2}}\label{fig:conv_error:ex2}
    \end{subfigure}
    \caption{discretization and approximation error $\|u_{\gamma,h} - \bar u\|^2_{L^2(\Omega)}$ for different $\gamma$ and $h$}
    \label{fig:conv_error}
\end{figure}

\paragraph[Example~2]{Example 2: $\kappa < 1$}\label{subsec:example2}

We also consider an example where \cref{reg} is only satisfied with $\kappa < 1$. The idea is to violate the assumption of the sufficient condition presented in \cref{thm:suffreg}. We modify the adjoint state $\bar p$ from \nameref{subsec:example1} to
\enlargethispage*{1cm}
\begin{multline*}
    \bar p(x) := \left( \tfrac{27}{2} x   \right)\1_{[0,\tfrac{3}{27})}(x)\\
    \begin{aligned}[t]
        &+\left( 266085 x^5 - \tfrac{433593}{2} x^4 + \tfrac{135765}{2} x^3 - \tfrac{20437}{2} x^2 + \tfrac{6812}{9} x - \tfrac{1703}{81} \right)\1_{[\tfrac{3}{27}, \tfrac{2}{9})}(x)\\
        &+\left(  11334492 x^5 - 14168034 x^4 + 7054821x^3 - \tfrac{3498235}{2} x^2 +  \tfrac{1943450}{9}x - \tfrac{860051}{81}  \right)\1_{[\tfrac{2}{9}, \tfrac{5}{18})}(x)\\
        &+\left(  -11334492 x^5 + 17316666 x^4 - 10553301 x^3 + \tfrac{6413635}{2} x^2 - \tfrac{1457650}{3} x + \tfrac{528697}{18}  \right)\1_{[\tfrac{5}{18}, \tfrac{3}{9})}(x)\\
        &+\left(  -\tfrac{709317}{2} x^5 + 696195 x^4 - \tfrac{1085913}{2} x^3 + 210182 x^2 -  \tfrac{121150}{3} x + \tfrac{27761}{9}  \right)\1_{[\tfrac{3}{9}, \tfrac{4}{9})}(x)\\
        &+\left( - 18x + 9   \right)\1_{[\tfrac{4}{9},\tfrac{5}{9})}(x)\\
        &+\left(  - \tfrac{707859}{2} x^5 + \tfrac{2149821}{2}x^4 - \tfrac{2604285}{2} x^3 + \tfrac{1573075}{2}x^2 -  \tfrac{710804}{3}x + \tfrac{256331}{9}  \right)\1_{[\tfrac{5}{9},\tfrac{6}{9})}(x)\\
        &+\left( -11340324 x^5 + 39376206 x^4 - 54660123 x^3 + \tfrac{75835981}{2} x^2 -  \tfrac{39434798}{3} x + \tfrac{16396175}{9}   \right)\1_{[\tfrac{6}{9},\tfrac{13}{18})}(x)\\
        &+\left( 11340324 x^5 - 42526134 x^4 + 63759915 x^3 - \tfrac{95552197}{2} x^2 +  \tfrac{161022862}{9} x - \tfrac{433967467}{162}   \right)\1_{[\tfrac{13}{18},\tfrac{7}{9})}(x)\\
        &+\left( 265356 x^5 - \tfrac{2221101}{2} x^4 + \tfrac{3712707}{2} x^3 - 1549124 x^2 +  \tfrac{11616563}{18} x - \tfrac{17395339}{162}   \right)\1_{[\tfrac{7}{9},\tfrac{8}{9})}(x)\\
            &+\left(  \tfrac{27}{2} x -\tfrac{27}{2}   \right)\1_{[\tfrac{8}{9},1]}(x),
    \end{aligned}
\end{multline*}
see \cref{fig:adjoint2}, while the remaining functions remain unchanged.
Note that for, e.g., $\hat x :=\frac{2}{9}$, we obtain $p'(\hat x)=0$ and $p(\hat x) = 3$, which violates the assumption of \cref{thm:suffreg}.
Hence we expect that $\kappa < 1$ holds, resulting in a much slower convergence speed; see \cref{thm:conv_rates}. This is corroborated by our numerical results: We obtain $\kappa_{\gamma,h} \approx 0.35 < 1$, which can be seen in \cref{tab:order:ex2,fig:conv_error:ex2}. Due to the slower convergence speed, we do not observe a saturation effect for the chosen range of $\gamma$ and $h$.

\section{Conclusions}

For optimal control problems with a convex penalty promoting minimizers that pointwise almost everywhere take on values from a given discrete set, Moreau--Yosida approximation allows the solution by a superlinearly convergent semi-smooth Newton method. On a structural assumption on the behavior of the adjoint state near singular sets, convergence rates as the approximation parameter $\gamma\to0$ can be derived. The same assumption also yields discretization error estimates for fixed $\gamma>0$. Numerical experiments corroborate the predicted rate. 

This work can be extended in a number of directions. First, an active set condition similar to \cref{reg} was derived in \cite{PoernerWachsmuth2018} for the approximation of bang-bang control of a semilinear equation and could be adapted to the multibang control setting. Of particular interest would be the extension to problems where the control enters into the principal part of an elliptic equation as in the case of topology optimization problems \cite{CK:2015,CKK:2017}.

On the other hand, the applicability of the multibang penalty $G$ to the regularization of inverse problems was demonstrated in \cite{ClasonDo:2017}. There, a condition related to \cref{reg} was used to derive strong convergence as $\alpha\to0$, albeit without rates; and a natural question is whether the more quantitative \cref{reg} would allow obtaining such rates at least in $L^2(\Omega)$. Finally, combined regularization, approximation, and discretization estimates for the convergence $(\alpha,\gamma,h)\to 0$ would be highly useful.

\section*{Acknowledgment}

This work was funded by the German Research Foundation (DFG) under grants 
Cl 487/1-1 and Wa 3626/1-1.

\printbibliography

\end{document}